\theoremstyle{plain}
\newtheorem{thm}{Theorem}[section]
\newtheorem{lem}[thm]{Lemma}
\newtheorem{cor}[thm]{Corollary}
\numberwithin{equation}{section}
\begin{document}

\title[Helicoidal surfaces rotating/translating under the MCF]{Helicoidal surfaces rotating/translating under the mean curvature flow}

\author{Hoeskuldur P. Halldorsson}
\address{MIT, Department of Mathematics, 77 Massachusetts Avenue, Cambridge, MA 02139-4307.}
\email{hph@math.mit.edu}

\subjclass[2010]{Primary 53C44. Secondary 53A10, 53C42}

\date{} 


\begin{abstract}
We describe all possible self-similar motions of immersed hypersurfaces in Euclidean space under the mean curvature flow and derive the corresponding hypersurface equations. Then we present a new two-parameter family of immersed helicoidal surfaces that rotate/translate with constant velocity under the flow. We look at their limiting behaviour as the pitch of the helicoidal motion goes to $0$ and compare it with the limiting behaviour of the classical helicoidal minimal surfaces. Finally, we give a classification of the immersed cylinders in the family of constant mean curvature helicoidal surfaces.
\end{abstract}

\maketitle

\section{Introduction}
The mean curvature flow (MCF) of immersed hypersurfaces in Euclidean space is defined as follows.
Let $M^n$ be an $n$-dimensional manifold and consider a family of smooth immersions $F_t = F(\cdot,t): M^n \rightarrow \mathbf R^{n+1}$, $t\in I$, with $M_t = F_t(M^n)$. The family of hypersurfaces $(M_t)_{t\in I}$ is said to \emph{move by mean curvature} if
\begin{equation*}
\frac{\partial F}{\partial t}(p,t) = \mathbf H(F(p,t))
\end{equation*} 
for $p \in M^n$ and $t \in I$. Here $\mathbf H$ is the mean curvature vector of $M_t$, given by $\mathbf H = -H\mathbf n$, where $\mathbf n$ is a choice of unit normal field and $H = \text{div}_{M_t}\mathbf n$ is the mean curvature of $M_t$.

Hypersurfaces which move in a self-similar manner under the MCF play an important role in the singularity theory of the flow. The most important ones are the shrinking hypersurfaces and the translating hypersurfaces, which model the so-called type 1 and type 2 singularities respectively (see \cite{huisksin}). Examples of shrinking hypersurfaces can for example be found in \cite{abrlang}, \cite{ang}, \cite{chopp}, \cite{kapkleenmoell} and  \cite{kleenmoell}, and examples of translating hypersurfaces in \cite{cluttschnurschulz}, \cite{nguyen} and \cite{wang}. Other types of self-similar motions under the flow have been studied less thoroughly. Some examples of expanding hypersurfaces can be found in \cite{angilmchopp} and \cite{eckhuisk}. In \cite{hph}, the author gave a complete classification of all self-similar solutions to the flow in the case $n=1$, the so-called curve shortening flow in the plane. However, the classification problem is significantly harder in higher dimensions.

In this paper, we present a new two-parameter family of complete immersed surfaces in $\mathbf R^3$ which rotate under the MCF. The surfaces belong to the family of so-called helicoidal surfaces, i.e., surfaces invariant under a helicoidal motion. Since the surfaces rotate around their helicoidal axis, they can also be viewed as translating solutions to the flow. However, they are not convex. The idea to look for these surfaces came from \cite{hungsmocz}.

The paper is structured as follows.
In Section \ref{selfsim}, we describe all possible self-similar motions of immersed hypersurfaces under the MCF and derive the equations the hypersurfaces have to satisfy.
It turns out that the only possible self-similar motions are the following:
\begin{itemize}
\item dilation with scaling function $\sqrt{2bt+1}$, i.e., shrinking with increasing speed if $b<0$ and expanding with decreasing speed if $b>0$,
\item translation with constant velocity,
\item rotation with constant angular velocity,
\item dilation and rotation together, with scaling function $\sqrt{2bt+1}$ and angular velocity in a fixed direction but with angular speed proportional to $\frac{\log(2bt+1)}{2b}$,
\item translation and rotation together but in orthogonal directions, with constant velocity and angular velocity.
\end{itemize}

In Section \ref{helicoidalsurfaces} we introduce helicoidal surfaces. In this paper, we focus on the planar curves given as the intersection of the surfaces with a plane orthogonal to their helicoidal axis. These curves generate the surfaces and give rise to a parametrization different from the one most common in the literature. Using this approach, we derive a new method for constructing helicoidal surfaces with prescribed mean curvature. This method is used in the sections that follow. 

Section \ref{rotatingsurfaces} contains the heart of the paper. There we present the two-parameter family of helicoidal surfaces that rotate with unit speed under the mean curvature flow. We derive some basic properties of these surfaces and investigate the limiting behaviour of the generating curves as the pitch of the helicoidal motion goes to 0. A specific choice of initial values gives convergence to a circle, so the corresponding surfaces in some sense converge to a cylinder.

In Section \ref{minimalsurfaces} we take a look at the classical helicoidal minimal surfaces described in \cite{wund}. We derive their parametrization and investigate their limiting behavior as the pitch goes to 0.  Finally, in Section \ref{cmcsurfaces} we look at the classical constant mean curvature helicoidal surfaces, studied in \cite{docarmdajcz}, \cite{hittrouss} and \cite{perd}. We derive their parametrization and in Theorem \ref{immersedcylinder}, we classify the immersed cylinders in this family of surfaces, in the spirit of the classification of the self-shrinking solutions to the curve shortening flow in the plane given by Abresch and Langer in \cite{abrlang}.

\section{Self-similar motions under the mean curvature flow}
\label{selfsim}

Let $\Sigma^n$ be a complete $n$-dimensional hypersurface immersed in $\mathbf R^{n+1}$. We call the immersion $F$ and to simplify notation we identify $\Sigma$ with $F(\Sigma)$. A self-similar motion of $\Sigma$ is a family $(\Sigma_t)_{t\in I}$  of immersed hypersurfaces where the immersions are of the form
\begin{equation*}
F(p,t) = g(t)Q(t)F(p) + \mathbf v(t), \quad p\in\Sigma,t\in I.
\end{equation*}
Here $I$ is an interval containing $0$ and $g:I \rightarrow \mathbf R$, $Q: I \rightarrow SO(n+1)$ and $\mathbf v:I\rightarrow \mathbf R^{n+1}$ are differentiable functions s.t. $g(0)=1$, $Q(0) = I$ and $\mathbf v(0) = \mathbf 0$, and hence $F(p,0) = F(p)$, i.e., $\Sigma_0 = \Sigma$.

This motion is the mean curvature flow of $\Sigma$ (up to tangential diffeomorphisms) if and only if the equation
\begin{equation*}
\left\langle \frac{\partial F}{\partial t}(p,t),\mathbf n(p,t) \right\rangle = -H(p,t)
\end{equation*}
holds for all $p\in\Sigma$, $t\in I$. Simple calculations yield that this equation is equivalent to
\begin{equation}
\label{FullJafna}
\begin{aligned}
 g(t)g'(t)\langle F(p),\mathbf n(p)\rangle &+ g^2(t)\langle Q^{T}(t)Q'(t)F(p),\mathbf n(p) \rangle \\
&+ g(t)\langle Q^{T}(t)\mathbf v'(t),\mathbf n(p)\rangle = -H(p).
\end{aligned}
\end{equation}
By looking at this equation at time $t=0$, we see that $\Sigma$ has to satisfy
\begin{equation}
\label{HofudJafna}
b\langle F,\mathbf n \rangle +\langle AF,\mathbf n \rangle +  \langle \mathbf c,\mathbf n \rangle = -H
\end{equation}
where $b = g'(0)$, $A=Q'(0)\in \mathfrak{so}(n+1)$ and $\mathbf c= \mathbf v'(0)$. It turns out that satisfying an equation of this form is also a sufficient condition for $\Sigma$ to move in a self-similar manner under the MCF. To see that, we look separately at two cases.\\
\\
{\bf Dilation and rotation:}
First let's look at the case where there is no translation term. Assume $\Sigma$ satisfies
\begin{equation}
\label{RotScal}
b\langle F,\mathbf n \rangle  + \langle AF,\mathbf n \rangle    = -H.
\end{equation}
Now, if the functions $g$ and $Q$ satisfy $g(t)g'(t) = b$ and $g^2(t)Q^{T}(t)Q'(t) = A$ for all $t\in I$, then Equation \eqref{FullJafna} is satisfied for all $p\in \Sigma$, $t\in I$. Solving these differential equations with our initial values gives
\begin{equation*}
g(t) = \sqrt{2bt+1} \quad \text{and} \quad Q(t) = \begin{cases} \exp(\frac{\log(2bt+1)}{2b}A) & \text{if } b\neq0,\\
\exp(tA) & \text{if } b=0. \end{cases} 
\end{equation*}
Therefore, under the MCF the hypersurface $\Sigma$ either expands and rotates forever with decreasing speed ($b >0$), rotates forever with constant speed ($b=0$) or shrinks and rotates with increasing speed until a singularity forms at time $t = -\frac{1}{2b}$ $(b<0)$. Of course, there is no rotation if $A=0$.  Note that when $b=-1$ and $A=0$, Equation \eqref{RotScal} reduces to the famous self-shrinker equation. In \cite{hph}, the author showed the existence of all these motions in the case of the curve shortening flow in the plane.\\
\\
{\bf Translation and rotation:}
Now let's look at the case where there is no dilation. Assume $\Sigma$ satisfies
\begin{equation}
\label{RotTran}
\langle AF,\mathbf n \rangle + \langle \mathbf c,\mathbf n \rangle = -H
\end{equation}
and furthermore impose the condition $A\mathbf c = 0$, i.e., the translation and rotation are in orthogonal directions.
If the functions $Q$ and $\mathbf v$ satisfy $Q^{T}(t)Q'(t) = A$ and $Q^{T}(t)\mathbf v'(t)=\mathbf c$ for all $t\in I$, then Equation \eqref{FullJafna} is satisfied for all $p \in \Sigma$, $t\in I$. Solving these differential equations with our initial values yields
\begin{equation*}
Q(t) = \exp(tA) \quad \text{and} \quad \mathbf v(t) = t\mathbf c. 
\end{equation*}
Therefore, under the MCF the hypersurface translates and rotates forever with constant velocity and angular velocity, and these motions are orthogonal.\\

But what happens if we include all three terms in the left hand side of Equation \eqref{HofudJafna}? Assume  $\Sigma$ satisfies \eqref{HofudJafna}. By translating $\Sigma$ by a vector $\mathbf w$, we get a hypersurface $\hat \Sigma$ satisfying the equation
\begin{equation*}
b\langle \hat F,\mathbf{\hat n} \rangle +\langle A\hat F,\mathbf{\hat n} \rangle +  \langle \mathbf c - (A+bI)\mathbf w,\mathbf {\hat n} \rangle = -\hat H.
\end{equation*}
Note that since $A \in \mathfrak{so}(n+1)$, $A$ is skew-symmetric, i.e., $A^{\mathsf T}=-A$. Therefore, $A$ has no non-zero real eigenvalues and $\ker(A) = (\text{im}(A))^{\perp}$. Thus, we can make the following choice of $\mathbf w$:
\begin{itemize}
\item If $b\neq  0$, we let $\mathbf w$ be the unique solution to the equation $(A+bI)\mathbf w = \mathbf c$. Then $\hat \Sigma$ satisfies an equation of the form \eqref{RotScal}, so $\Sigma$ dilates and rotates around the point $\mathbf w$.
\item If $b=0$, we let $\mathbf w$ be such that  $\mathbf c = A\mathbf w + \mathbf c_0$ where $A \mathbf c_0 = \mathbf 0$. Then $\hat \Sigma$ satisfies an equation of the form \eqref{RotTran}, so $\Sigma$ translates and rotates around the point $\mathbf w$.
\end{itemize}
In short, the general case can always be reduced to one of the two cases already covered. Hence, they give all possible self-similar motions of immersed hypersurfaces under the MCF.

\section{Helicoidal surfaces}
\label{helicoidalsurfaces}

For $h \in \mathbf R$, let $\gamma_t^h:\mathbf R^3 \rightarrow \mathbf R^3$ be the one-parameter subgroup of the group of rigid motions of $\mathbf R^3$ given by
\begin{equation*}
\gamma_t^h(x,y,z) = (x\cos t - y\sin t, x \sin t + y \cos t, z+ht), \quad t \in \mathbf R.
\end{equation*}
This motion is called a helicoidal motion with axis the $z$-axis and pitch $h$. A \emph{helicoidal surface with axis the $z$-axis and pitch $h$} is a surface that is invariant under $\gamma_t^h$ for all $t$. When $h=0$ it reduces to a rotationally symmetric surface, but in this paper we will focus on the case $h\neq0$. By reflecting across the $xy$-plane if necessary, we can assume $h>0$.
Now, the parameter $\frac 1 h$ represents the angular speed of the rotation as we move along the $z$-axis with unit speed. By setting that parameter equal to 0 (corresponding to the limit case $h\rightarrow \infty$) we get surfaces invariant under translation along the $z$-axis. Most surface equations in this paper extend smoothly to that case so we will often use the notation $h=\infty$.

A helicoidal surface with $h>0$ can be parametrized in the following way. Let $X: \mathbf R \rightarrow \mathbf R^2$ be an immersed curve in the $xy$-plane, parametrized by arc length. We translate the curve $X$ along the $z$-axis with speed $h$ and at the same time rotate it counterclockwise around with $z$-axis with unit angular speed. Then the curve traces an immersed helicoidal surface $\Sigma$ with axis the $z$-axis and pitch $h$, whose parametrization is given by
\begin{equation*}
F(s,t) = (e^{i t}X(s),ht), \quad s\in \mathbf R, t \in \mathbf R.
\end{equation*}
Note that in the literature, it is more common to parametrize the surface so that the generating curve lies in the $xz$-plane. There is also the so-called natural parametrization used in \cite{docarmdajcz}. 

The following notation will be used. Let $T = \frac{dX}{ds}$ be the unit tangent along $X$ and $N = iT$ its leftward pointing normal. The signed curvature of $X$ is $k = \langle \frac{d^2X}{ds^2},N \rangle$. It turns out to be convenient to work with the functions $\tau = \langle X, T \rangle$ and $\nu = \langle X, N \rangle$. They satisfy
\begin{equation}
\label{TauNuDiffur}
\begin{aligned}
\frac{d}{ds} \tau &= 1 + k\nu \\
\frac{d}{ds}\nu &= - k\tau
\end{aligned}
\end{equation}
and of course we have
\begin{equation}
\label{Grunnjafna}
X = (\tau + i \nu)T, \quad r^2:=|X|^2 = \tau^2+\nu^2.
\end{equation}
An interesting geometric interpretation of $(\tau,\nu)$ is given in \cite{perd}. 

Now, the tangent space of $\Sigma$ is spanned by the coordinate tangent vectors
\begin{equation*}
\frac{\partial F}{\partial s} = (e^{i t}T,0), \quad
\frac{\partial F}{\partial t} = (i e^{it}X,h), 
\end{equation*}
and one easily verifies that a unit normal is given by
\begin{equation*}
\mathbf{n}= \frac{(he^{i t}N,-\tau)}{\sqrt{\tau^2+h^2}}.
\end{equation*}
The metric is given by the matrix
\begin{equation*}
g_{ij} = 
\left(
\begin{array}{cc}
  1 &  - \nu   \\
  - \nu &   r^2+h^2  
\end{array}
\right)
\end{equation*}
and the inverse metric by
\begin{equation*}
g^{ij} = \frac{1}{\tau^2+h^2}
\left(
\begin{array}{cc}
  r^2+h^2   &  \nu   \\
  \nu &  1
\end{array}
\right).
\end{equation*}
The second derivatives are
\begin{equation*}
\frac{\partial^2F}{\partial s^2} = (e^{i t}kN,0), \quad
\frac{\partial^2F}{\partial s \partial t} = (ie^{i t}T,0), \quad
\frac{\partial^2F}{\partial t^2} = (- e^{i t}X,0),
\end{equation*}
so the second fundamental form is given by the matrix
\begin{equation*}
A_{ij} = -\langle F_{ij},\mathbf n \rangle = -\frac{h}{\sqrt{\tau^2+h^2}}
\left(
\begin{array}{cc}
  k &  1   \\
  1 &  - \nu  
\end{array}
\right).
\end{equation*}
Finally, the mean curvature of $\Sigma$ is
\begin{equation}
\label{meancurvature}
H = g^{ij}A_{ij} = -\frac{h(k(r^2+h^2)+\nu)}{(\tau^2+h^2)^{\frac 3 2}}.
\end{equation}

The following theorem shows how to get a helicoidal surface of prescribed mean curvature.

\begin{thm}
\label{prescribed}
For every smooth function $\Psi : \mathbf R^2 \rightarrow \mathbf R$ and $h>0$, there exists a complete immersed helicoidal surface of pitch $h$ satisfying the equation $H = \Psi(\tau,\nu)$.
\end{thm}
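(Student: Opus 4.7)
The plan is to reduce the PDE $H = \Psi(\tau,\nu)$ to an ODE system in $(\tau,\nu)$ and show global existence. First, I would solve Equation \eqref{meancurvature} algebraically for $k$: setting $H = \Psi(\tau,\nu)$ and using $h>0$, one obtains
\begin{equation*}
k = K(\tau,\nu) := -\frac{1}{r^2+h^2}\left(\nu + \frac{(\tau^2+h^2)^{3/2}}{h}\Psi(\tau,\nu)\right),
\end{equation*}
which is smooth on $\mathbf{R}^2$. Substituting this into the system \eqref{TauNuDiffur} gives the autonomous ODE
\begin{equation*}
\tau'(s) = 1 + K(\tau,\nu)\,\nu, \qquad \nu'(s) = -K(\tau,\nu)\,\tau,
\end{equation*}
with smooth right-hand side. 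Any initial condition $(\tau(0),\nu(0)) = (\tau_0,\nu_0)$ yields a unique local solution by standard ODE theory.

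The main obstacle is global existence, since $K$ need not be bounded. The key observation is that along any solution,
\begin{equation*}
\frac{d}{ds}(r^2) = 2\tau\tau' + 2\nu\nu' = 2\tau(1+K\nu) - 2\nu K\tau = 2\tau,
\end{equation*}
so $|(r^2)'| \le 2r$, which forces $|r'|\le 1$ wherever $r>0$. Consequently $r(s) \le r(0) + |s|$, so the solution cannot blow up in finite $s$ and extends to all of $\mathbf{R}$. In particular $(\tau,\nu)$ remains bounded on bounded intervals, so $k(s) = K(\tau(s),\nu(s))$ is smooth and bounded on bounded intervals.

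From $k(s)$ I would reconstruct the planar curve. Choose any $\theta_0 \in \mathbf{R}$ and set $\theta(s) = \theta_0 + \int_0^s k(u)\,du$, so that $T(s) := e^{i\theta(s)}$ satisfies $T' = ikT = kN$. Using \eqref{Grunnjafna}, define $X(s) := (\tau(s)+i\nu(s))T(s)$. A direct computation using \eqref{TauNuDiffur} shows $X'(s) = T(s)$, so $X$ is parametrized by arc length on all of $\mathbf{R}$, with unit tangent $T$ and signed curvature $K(\tau,\nu)$; by choosing $\theta_0$, $\tau_0$, $\nu_0$ appropriately, $X$ is an immersed complete planar curve.

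Finally, the helicoidal surface $F(s,t) = (e^{it}X(s),ht)$ generated by $X$ is an immersion (the metric determinant $\tau^2+h^2 \ge h^2 > 0$ never degenerates), is invariant under $\gamma_t^h$, and by \eqref{meancurvature} together with the construction of $K$ satisfies $H = \Psi(\tau,\nu)$ everywhere. Completeness of the induced metric follows from completeness of $X$ in the $s$-direction combined with $g_{tt} = r^2+h^2 \ge h^2$ in the $t$-direction, which makes both coordinate directions complete. The only non-routine step is the linear-growth estimate on $r$; everything else is a direct reconstruction.
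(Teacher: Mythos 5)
Your proposal is correct and follows essentially the same route as the paper: solve Equation \eqref{meancurvature} for $k$ as a smooth function of $(\tau,\nu)$, run the autonomous system \eqref{taunuODE}, use $\frac{d}{ds}r = \tau/r \le 1$ (the paper's \eqref{rmat}) to rule out finite-time blow-up, and reconstruct $X=(\tau+i\nu)e^{i\theta}$. You add a few details the paper leaves implicit (the explicit formula for $k$, non-degeneracy of the metric, and a sketch of completeness of the induced surface metric), but the argument is the same.
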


\begin{proof}
By solving Equation \eqref{meancurvature} for $k$, we have $k$ written as a smooth function of $\tau$ and $\nu$, so the proof is reduced to the following lemma.
\end{proof}

\begin{lem}
\label{FerillFenginn}
For every smooth function $\Phi:\mathbf R^2\rightarrow \mathbf R$, point $z_0\in\mathbf R^2$ and angle $\theta_0 \in [0,2\pi)$, there is a unique immersed curve $X:\mathbf R \rightarrow \mathbf R^2$ satisfying the equation $k = \Phi(\tau,\nu)$ and going through $z_0$ with angle $\theta_0$.
\end{lem}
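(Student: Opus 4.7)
The plan is to reformulate the curvature condition $k = \Phi(\tau,\nu)$ as an autonomous smooth ODE system for the pair (position, unit tangent) and then invoke standard ODE theory. Since $\tau = \langle X,T\rangle$ and $\nu = \langle X,N\rangle = \langle X,iT\rangle$, the defining relations $X' = T$ and $T' = ikT$ of an arc length parametrized planar curve become, under the substitution $k = \Phi(\tau,\nu)$, the autonomous system
\begin{equation*}
X' = T, \qquad T' = i\,\Phi\bigl(\langle X,T\rangle,\langle X,iT\rangle\bigr)\,T
\end{equation*}
on $\mathbf R^2 \times \mathbf R^2$. The right-hand side is smooth in $(X,T)$, so Picard-Lindelöf yields a unique local solution with the initial data $X(0) = z_0$ and $T(0) = e^{i\theta_0}$.

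To extend this solution to all of $\mathbf R$, I would note that $\frac{d}{ds}|T|^2 = 2\,\mathrm{Re}(\bar T\cdot ikT) = 0$, so $|T(s)| \equiv 1$. Then $|X'| = 1$ forces $|X(s)| \leq |z_0| + |s|$, so neither $X$ nor $T$ can escape a bounded set in finite time, and the standard continuation principle promotes the local solution to a global one. As an alternative, one could work directly with the closed system $\tau' = 1 + \Phi(\tau,\nu)\nu$, $\nu' = -\Phi(\tau,\nu)\tau$ coming from \eqref{TauNuDiffur}: the identity $(\tau^2+\nu^2)' = 2\tau$ gives $r(s) \leq r(0)+|s|$, preventing blow-up, and one then reconstructs $X$ from $(\tau,\nu)$ via $\theta(s) = \theta_0 + \int_0^s \Phi(\tau,\nu)\,du$ and the formula $X = (\tau+i\nu)e^{i\theta}$ from \eqref{Grunnjafna}.

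Finally, I would verify the claimed properties: $X$ is arc length parametrized because $|X'| \equiv 1$; it passes through $z_0$ with initial tangent direction $e^{i\theta_0}$ by construction; and its signed curvature equals $\Phi(\tau,\nu)$ by the way the system was set up. Uniqueness among arc length parametrized curves with the given initial position and tangent is the uniqueness clause of Picard-Lindelöf applied to the same system. The one conceptual point that requires care is that although $k = \Phi(\tau,\nu)$ is a condition involving the global position of the curve in the plane (through $\tau$ and $\nu$), the substitution into $(X,T)$ variables produces a genuinely autonomous smooth ODE; once this reformulation is made, both local existence and the global extension are routine, and no delicate estimate beyond $|T| \equiv 1$ is needed.
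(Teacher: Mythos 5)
Your proof is correct. Your primary route works in the four--dimensional phase space $(X,T)$, writing the curvature condition as the autonomous system $X'=T$, $T'=i\,\Phi(\langle X,T\rangle,\langle X,iT\rangle)T$, and using the conservation of $|T|$ plus the resulting linear bound $|X(s)|\le|z_0|+|s|$ to rule out finite-time escape. The paper instead passes immediately to the rotation-reduced coordinates: it solves the closed system $\tau'=1+\nu\Phi(\tau,\nu)$, $\nu'=-\tau\Phi(\tau,\nu)$, $\theta'=\Phi(\tau,\nu)$ with initial data $\theta(0)=\theta_0$, $\tau(0)+i\nu(0)=e^{-i\theta_0}z_0$, uses $\frac{d}{ds}\sqrt{\tau^2+\nu^2}=\tau/\sqrt{\tau^2+\nu^2}\le1$ for global existence, and then reconstructs $X=(\tau+i\nu)e^{i\theta}$ and verifies $X'=e^{i\theta}$, $k=\theta'$, $\langle X,T\rangle=\tau$, $\langle X,N\rangle=\nu$. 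The two arguments are logically equivalent (your bound on $|X|$ is exactly the paper's bound on $r$ seen in the other coordinates), and you even record the paper's version as your alternative. The only substantive difference is strategic: the paper's reduction to the planar $(\tau,\nu)$ system is not just a proof device for this lemma but the phase-plane framework on which all of Sections 4--6 rest, so the paper's choice of coordinates pays off later, whereas your $(X,T)$ formulation is self-contained but would have to be converted anyway. One small point worth making explicit in your version, which the paper also glosses over, is that uniqueness is asserted within the class of arc-length parametrizations based at $s=0$; your closing sentence handles this adequately.
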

\begin{proof}
Keeping in mind \eqref{TauNuDiffur} and \eqref{Grunnjafna}, we let $\tau, \nu, \theta$ be the unique solution to the ODE system
\begin{equation}
\label{taunuODE}
\left\{
\begin{aligned}
\tau' &= 1 + \nu\Phi(\tau,\nu) \\
\nu' &= -\tau\Phi(\tau,\nu)\\
\theta' &= \Phi(\tau,\nu)
\end{aligned} \right.
\end{equation}
with initial values $\theta(0) = \theta_0$, $\tau(0)+i\nu(0) = e^{-i\theta_0}z_0$, and then define the curve as
\begin{equation*}
 X = (\tau+i\nu)e^{i\theta}.
\end{equation*}
Since 
\begin{equation}
\label{rmat}
\frac{d}{ds}\sqrt{\tau^2+\nu^2} = \frac{\tau\tau' + \nu\nu'}{\sqrt{\tau^2+\nu^2}} = \frac{\tau}{\sqrt{\tau^2+\nu^2}} \leq 1,
\end{equation}
the solution cannot blow up in finite time and hence is defined on all of $\mathbf R$. Note that
\begin{equation*}
X' = (\tau'+i\nu'+i\theta'(\tau+i\nu))e^{i\theta} = e^{i\theta},
\end{equation*}
so $X$ is parametrized by arc length with tangent $T = e^{i\theta}$ and hence the curvature $k$ is equal to $\theta' = \Phi(\tau,\nu)$. Finally,
\begin{equation*}\begin{aligned}
\langle X,T \rangle &=\text{Re}(Xe^{-i\theta} )= \tau, \\
\langle X,N \rangle &= \text{Re}(X(-i)e^{-i\theta} )= \nu,
\end{aligned}\end{equation*}
finishing the proof.
\end{proof}

Theorem \ref{prescribed} generalizes in some sense the results in \cite{baikkouf}, which treats the case when $H$ is a function of $r = \sqrt{\tau^2+\nu^2}$. Moreover, our method always gives a complete surface. However, \cite{baikkouf} gives an explicit integral formula.

Typically the system of ODEs \eqref{taunuODE} for $\tau$ and $\nu$ has a one-parameter trajectory space. Therefore, we get a one-parameter family of surfaces for each $h$.

In the remaining three sections, we look separately at three types of helicoidal surfaces, namely surfaces rotating under the MCF and then the classical minimal and constant mean curvature helicoidal surfaces.

\section{Helicoidal surfaces rotating/translating under the MCF}
\label{rotatingsurfaces}

In this section, we describe a two-parameter family of immersed helicoidal surfaces that rotate around the $z$-axis with unit speed under the MCF. Since the surfaces are invariant under the helicoidal motion $\gamma_t^h$, this motion can also be seen as a translation with speed $h$ in the negative $z$-direction.

The matrix $R =
\left(
\begin{smallmatrix}
  0& -1  &0   \\
  1&   0&  0 \\
  0&   0&   0
\end{smallmatrix}
\right)
$ generates the rotation, so by Equation \eqref{RotTran} we get a surface rotating with unit speed if and only if
\begin{equation*}
-H = \langle RF, \mathbf{n} \rangle = \frac{h\tau}{\sqrt{\tau^2+h^2}}.
\end{equation*}
Since $H$ is given as a smooth function of $\tau$, the existence of these surfaces is guaranteed by Theorem \ref{prescribed}. The corresponding equation for the curve $X$ is
\begin{equation}
\label{SnuningsGrunnjafna}
k = \tau \frac{\tau^2+h^2}{r^2+h^2} - \frac{\nu}{r^2+h^2}
\end{equation}
so the two-dimensional system of ODEs for $\tau$ and $\nu$ becomes
\begin{equation}
\label{Snuningshneppi}
\left\{
\begin{aligned}
\tau' &= \frac{\tau^2+h^2}{r^2+h^2}(1+\tau\nu) \\
\nu'&= \frac{\tau\nu-\tau^2(\tau^2+h^2)}{r^2+h^2}.
\end{aligned}
\right.
\end{equation}

Note that the right hand side of \eqref{Snuningshneppi} remains the same when $(\tau,\nu)$ is replaced by $(-\tau,-\nu)$. Therefore, $s \mapsto -(\tau(-s),\nu(-s))$ is also a solution to the system, which of course just corresponds to the curve $X$ parametrized backwards. This symmetry will simplify some of our arguments. Also note that the system has no equilibrium points.

The limit case $h= \infty$ yields $k=\tau$, the rotation equation for the curve shortening flow in the plane, and the rotating surface is simply the product $X \times \mathbf R$. These curves were described by the author in \cite{hph}.
It turns out that when $h$ is finite, $X$ still has most of the same properties although it no longer is always embedded.

\begin{thm}
\label{Snuningssetning}
For each $h > 0$ there exists a one-parameter family of helicoidal surfaces with pitch $h$, that rotate with unit speed around their helicoidal axis under the MCF. The corresponding generating curves have one point closest to the origin and consist of two properly embedded arms coming from this point which strictly go away to infinity. Each arm has infinite total curvature and spirals infinitely many circles around the origin. The curvature goes to $0$ along each arm and the limiting angle between the tangent and the location is $\frac \pi 2$.
\end{thm}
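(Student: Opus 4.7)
The plan is to study the autonomous planar system \eqref{Snuningshneppi} in the $(\tau,\nu)$-plane; each trajectory corresponds via Theorem \ref{prescribed} to one surface, and the one-parameter family of surfaces reflects the one-parameter space of trajectories. Two basic facts set the stage. First, the system has no equilibria, as already noted: $\tau'=0$ forces $1+\tau\nu=0$, and then $\nu' = [-1-\tau^2(\tau^2+h^2)]/(r^2+h^2) < 0$. Second, a direct computation (already recorded as \eqref{rmat}) gives $(r^2)' = 2\tau$, so the critical points of $r$ along a trajectory are exactly the zeros of $\tau$. At any such zero, $\tau' = h^2/(\nu^2+h^2) > 0$, so $\tau$ crosses strictly from negative to positive; in particular at most one zero can occur, and where it does, $r$ attains a unique strict global minimum.

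To show the minimum exists and that $r\to\infty$ on each arm, I would invoke Poincar\'e--Bendixson. A periodic orbit $\Gamma$ of \eqref{Snuningshneppi} would satisfy $0 = \oint_\Gamma d(r^2) = 2\oint_\Gamma \tau\,ds$, forcing $\tau$ to change sign on $\Gamma$, which contradicts the uniqueness of the zero just established. Combined with the absence of equilibria, this rules out any compact limit set off the origin. If a trajectory had $\tau$ of constant sign, $r$ would be monotone, and the half on which $r$ is bounded would have to approach the origin; but the vector field there equals $(1,0)\ne 0$, so the origin is reached only in finite time, a contradiction. Hence every trajectory has exactly one zero of $\tau$, giving the minimum point, and the same argument applied to each of the two arms yields $r\to\infty$. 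Embeddedness of each arm is then immediate from the strict monotonicity of $r = |X|$, and properness follows from $r\to\infty$.

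The remaining assertions --- limiting tangent--position angle $\pi/2$, $k\to 0$, and infinite total curvature --- amount to a refined asymptotic analysis, and this is the main technical step. The key geometric object is the nullcline $\{\tau\nu = -1\}$: one computes $(\tau\nu)' = -\tau[1 + \tau^2(\tau^2+h^2)]/(r^2+h^2)$ on it, strictly negative for $\tau > 0$, so the trap region $\{\tau>0,\ \tau\nu<-1\}$ is forward-invariant. A sign chase on the forward arm --- using that $\nu' < 0$ as soon as $\tau(\tau^2+h^2) > \nu$, which happens once $\tau$ grows enough --- shows that $\nu$ eventually becomes very negative and the trajectory enters the trap. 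Inside, $\tau'<0$ and $\nu'<0$, so $\tau$ is bounded and $\nu\to -\infty$; combined with $r\to\infty$, the boundedness of $\tau$ already gives $\cos\alpha = \tau/r \to 0$, where $\alpha$ denotes the angle between $T$ and $X$, and hence the limiting angle $\pi/2$. The curvature formula \eqref{SnuningsGrunnjafna} then yields $k\sim -\nu/r^2 \sim -1/\nu\to 0$. The principal obstacle is establishing the expected asymptotic balance $\tau\nu\to -1$ (the unique value of $L<0$ around which the leading-order expression for $(\tau\nu)'$ changes sign), which gives $r(s) \sim (3s)^{1/3}$ and hence $\int|k|\,ds \gtrsim \int s^{-1/3}\,ds = \infty$, yielding simultaneously infinite total curvature and infinite winding (since $\phi' = -\nu/r^2$ has the same integral behavior). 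Making this balance rigorous --- in particular ruling out oscillatory or anomalous $\tau\nu \to -\infty$ scenarios --- is the delicate part that goes substantively beyond the $h=\infty$ analysis in \cite{hph}.
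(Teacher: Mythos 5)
Your opening moves are correct and take a somewhat different route from the paper: you get the unique minimum of $r$ and $r\to\infty$ on each arm from Poincar\'e--Bendixson plus the absence of equilibria and periodic orbits, whereas the paper argues entirely by elementary monotonicity ($\tau$ and $k$ each have at most one zero, hence $\tau$ and $\nu$ each have at most two extrema and possess limits, and a case analysis in Lemmas \ref{nulim} and \ref{taulim} pins those limits down). Both work. The problems are in the second half. First, the ``sign chase'' into the trap region $\{\tau>0,\ \tau\nu<-1\}$ is circular as written: the clause ``$\tau(\tau^2+h^2)>\nu$, which happens once $\tau$ grows enough'' presupposes that $\nu$ does not grow at least as fast, and nothing you state excludes the scenario in which the forward arm escapes through the first quadrant with $\tau,\nu\to+\infty$ and $\nu\gtrsim\tau(\tau^2+h^2)$, where $\tau'>0$ and $\nu'>0$ persist. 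This is exactly what the paper's Lemma \ref{ratio} is for: $\nu/\tau$ is strictly decreasing in the open first quadrant, giving $\nu\le C\tau$ there, whence by \eqref{SnuningsGrunnjafna} $k\to\infty$ and $\nu'=-k\tau\to-\infty$, a contradiction. You need that lemma or a substitute (e.g.\ that the cubic nullcline $\nu=\tau(\tau^2+h^2)$ is crossed only downward in the first quadrant together with a bound on $d\nu/d\tau$ above it).

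Second, and more seriously, you route infinite total curvature and infinite winding through the precise asymptotic balance $\tau\nu\to-1$, $r\sim(3s)^{1/3}$, and you explicitly concede that making this rigorous is the unresolved ``delicate part.'' So as it stands the proposal does not prove two of the theorem's assertions. The paper shows this precision is unnecessary: once one knows $\nu\to\mp\infty$, $\tau\to$ a finite limit, and $r\to\infty$ monotonically on each arm, Lemma \ref{philim} computes
\begin{equation*}
\frac{d\phi}{d\log r}=r\,\frac{-\nu/r^2}{\tau/r}=-\frac{\nu}{\tau}\longrightarrow+\infty ,
\end{equation*}
and since $\log r\to\infty$ this already forces $\phi\to+\infty$ (infinite winding) with no rate information about $k$ or $r$. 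Infinite total curvature then follows from $\theta=\phi-\arg(\tau+i\nu)$, because $\arg(\tau+i\nu)$ converges (to $\mp\pi/2$, which also gives the limiting angle). I would recommend abandoning the $\tau\nu\to-1$ analysis and replacing your endgame with this soft argument; with that change and the ratio lemma inserted, your outline closes up.
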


Examples of the generating curves appear in Figures \ref{fig:rot1}-\ref{fig:rot3}.\\

The proof will be given through a series of lemmas. We start by investigating the limit behavior of $\tau$ and $\nu$.
\begin{lem}
\label{limtil}
Both $\tau$ and $\nu$ have (possibly infinite) limits as $s \rightarrow \pm \infty$.
\end{lem}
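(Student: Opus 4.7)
The plan is a phase-plane analysis of the ODE system \eqref{Snuningshneppi} in the $(\tau,\nu)$-plane. By the symmetry $s\mapsto -s$, $(\tau,\nu)\mapsto -(\tau,\nu)$, it suffices to treat $s\to +\infty$. The starting observation is that $\tau$ has at most one zero: at any $s_0$ with $\tau(s_0)=0$, the first equation gives $\tau'(s_0) = h^2/(\nu(s_0)^2+h^2)>0$, so $\tau$ can only cross zero upward. Hence $\tau$ is eventually of constant sign, which after relabeling may be taken to be $\tau>0$. A direct computation yields $(r^2)'=2(\tau\tau'+\nu\nu')=2\tau$, so $r$ is eventually strictly increasing and has a limit $r_\infty\in(0,\infty]$.

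To rule out $r_\infty<\infty$ I would invoke Poincar\'e--Bendixson: in a planar autonomous system with no equilibria a bounded forward orbit must $\omega$-limit to a periodic orbit, but no closed orbit can lie in $\{\tau\ge 0\}$, since $(r^2)'=2\tau$ is nonnegative there and positive off the isolated set $\{\tau=0\}$, preventing $r^2$ from returning to its initial value. Hence $r\to\infty$. The same kind of zero-crossing analysis applies to $\nu$: at $\nu=0$ with $\tau>0$, the second equation gives $\nu'=-\tau^2(\tau^2+h^2)/(r^2+h^2)<0$, so $\nu$ can cross zero only downward and is eventually of constant sign.

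If eventually $\nu<0$, the numerator of $\nu'$ is a sum of two negative terms, so $\nu$ is strictly decreasing and converges in $[-\infty,0)$. If eventually $\nu>0$, then $1+\tau\nu>0$ forces $\tau'>0$, so $\tau$ converges in $(0,\infty]$; comparing the orbit with the nullcline $\nu=\tau(\tau^2+h^2)$ shows the orbit must eventually lie below it (automatic if $\tau_\infty<\infty$, since the nullcline is bounded while $\nu\to\infty$; if $\tau_\infty=\infty$, an asymptotic estimate on $(\nu-\tau(\tau^2+h^2))'$ produces a contradiction with staying above), which makes $\nu$ eventually decreasing and hence convergent.

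It remains to obtain a limit for the other coordinate. If one of $\tau,\nu$ has a finite limit, the identity $\tau^2+\nu^2=r^2\to\infty$ forces the other to diverge to $\pm\infty$. In the subcase $\nu\to -\infty$, the key step is to first bound $\tau$: at any local extremum $\tau'=0$, i.e.\ $\tau = -1/\nu = 1/|\nu|$, so local extrema have $\tau$-values tending to $0$, and a monotonicity argument then yields either $\tau$ eventually monotone or $\tau$ trapped between consecutive extrema whose values tend to $0$; in both cases $\tau\to 0$. I expect the main obstacle to be precisely this last argument: once both $\tau$ and $\nu$ may diverge, the topological leverage of Poincar\'e--Bendixson and the single-sign zero-crossing analysis are no longer enough, and one must exploit the explicit nullcline structure together with the monotonicity of $r$ to pin down the asymptotics.
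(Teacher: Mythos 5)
Your route is genuinely different from the paper's. The paper never leaves elementary calculus: it shows $\tau$ has at most one zero (as you do), shows $k$ has at most one zero by checking $k'>0$ wherever $k=0$, concludes from $\nu'=-k\tau$ that $\nu$ has at most two extrema, and then uses a second-derivative test at critical points of $\tau$ (where $\tau\nu=-1$) to show these are minima when $\tau<0$ and maxima when $\tau>0$, so $\tau$ also has at most two extrema. Everything is eventually monotone and the lemma follows. Your phase-plane argument via Poincar\'e--Bendixson and nullclines is heavier machinery but buys a more geometric picture; if you clean it up it does prove the statement, and the observation that no periodic orbits can exist because $(r^2)'=2\tau$ and $\tau$ can only cross zero upward is a nice one.

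That said, two steps need repair. First, ``after relabeling may be taken to be $\tau>0$'' is not available to you: the symmetry $s\mapsto-(\tau(-s),\nu(-s))$ flips the sign of $\tau$ \emph{and} reverses time, and you have already spent it reducing to $s\to+\infty$. You must treat the case $\tau<0$ for all $s$ separately --- fortunately your own tools handle it, since then $(r^2)'=2\tau<0$ makes the forward orbit bounded, and the system has no periodic orbits at all (on a closed orbit $\oint\tau\,ds=0$ would force a downward zero crossing of $\tau$), so Poincar\'e--Bendixson gives a contradiction and this case is vacuous. Second, in the sub-case where eventually $\nu>0$, the parenthetical justification that the orbit ends up below the nullcline $\nu=\tau(\tau^2+h^2)$ does not parse: if $\tau$ is bounded and $\nu\to\infty$ the orbit would sit \emph{above} the bounded nullcline, not below it. The fix is simpler than what you attempt: setting $g=\nu-\tau(\tau^2+h^2)$, one has $g'=-(3\tau^2+h^2)\tau'<0$ wherever $g=0$ and $\tau'>0$, so the orbit crosses the $\nu$-nullcline at most once and $\nu$ is eventually monotone --- which already yields a (possibly infinite) limit whether the orbit ends up above or below. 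Similarly, your closing claim that ``in both cases $\tau\to0$'' is unjustified when $\tau$ is merely eventually monotone, but it is also unnecessary: monotonicity alone gives the possibly infinite limit the lemma asks for.
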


\begin{proof}

Note that if $\tau(s) = 0$, then $\tau'(s) = \frac{h^2}{\nu^2(s)+h^2} > 0$ so $\tau$ has at most one zero, and is negative before it and positive after it.

If $k(s)=0$, then it can be shown that $k'(s)>0$, by differentiating \eqref{SnuningsGrunnjafna} directly and using the fact that $\tau'(s) =1$, $\nu'(s)=0$ and $\tau(s)\nu(s) \geq 0$. Therefore,  $k$ also has at most one zero, and is positive before it and negative after it.

Since $\nu' = -k\tau$, $\nu$ has at most two extrema and therefore has a (possibly infinite) limit in each direction.

If $\tau'(s)=0$, then by differentiating $\tau' = 1+k\nu$ and using \eqref{SnuningsGrunnjafna} and the fact that $\tau(s)\nu(s) = k(s)\nu(s) = -1$, we get that $\tau''(s) = - \frac{k'(s)+\tau^4(s)}{\tau(s)}$ and $k'(s)+\tau^4(s) > 0$. Hence $\tau$ has a local minimum at $s$ if $\tau(s)<0$ and a local maximum if $\tau(s)>0$. But that means $\tau$ has at most two extrema and hence has a (possibly infinite) limit in each direction.
\end{proof}

\begin{lem}
\label{ratio}
The ratio $\frac{\nu}{\tau}$ is decreasing wherever both $\tau$ and $\nu$ are positive.
\end{lem}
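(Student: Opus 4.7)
The proof should be a direct computation using the quotient rule on $\frac{\nu}{\tau}$ combined with the ODE system \eqref{Snuningshneppi}. Since $\tau > 0$ in the region of interest, showing $\left(\frac{\nu}{\tau}\right)' < 0$ reduces to showing that the numerator $\nu'\tau - \nu\tau'$ is negative.

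Plugging in the expressions for $\tau'$ and $\nu'$ from \eqref{Snuningshneppi} and clearing the common factor $\frac{1}{r^2+h^2}$, I expect the bracket to simplify, after cancelling the $\tau^2\nu$ term that appears with opposite signs, to something of the form $-\tau^3 r^2 - h^2(\tau r^2 + \nu)$ (or an equivalent grouping such as $-\tau^3(\tau^2+\nu^2) - h^2(\tau^3 + \nu + \tau\nu^2)$). Each summand here is manifestly nonnegative when $\tau,\nu>0$ and at least one is strictly positive, so the whole bracket is strictly negative, proving the lemma with strict inequality.

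In outline, the plan is: (i) write $\left(\frac{\nu}{\tau}\right)' = \frac{\nu'\tau - \nu\tau'}{\tau^2}$; (ii) substitute the ODE expressions and combine over the common denominator $(r^2+h^2)\tau^2$; (iii) expand the numerator and group terms to exhibit it as a sum of monomials in $\tau, \nu, h$ with a uniform negative sign in the quadrant $\tau,\nu>0$; (iv) conclude strict monotonicity.

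There is really no obstacle beyond bookkeeping — the main care needed is just to not lose a sign when expanding $(\tau^2+h^2)(1+\tau\nu)$ against the second numerator $\tau\nu - \tau^2(\tau^2+h^2)$, and to recognize that the $\tau^2\nu$ terms cancel, which is what makes the remaining expression sign-definite rather than mixed.
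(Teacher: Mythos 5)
Your computation is correct and is exactly the paper's argument: the paper also just applies the quotient rule to $\frac{\nu}{\tau}$ using \eqref{Snuningshneppi} and arrives at $\frac{d}{ds}\frac{\nu}{\tau} = -\frac{\tau^5+\tau^3\nu^2+h^2\tau^3+h^2\tau\nu^2+h^2\nu}{\tau^2(r^2+h^2)}$, which agrees term by term with your grouping $-\tau^3 r^2 - h^2(\tau r^2+\nu)$. Nothing further is needed.
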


\begin{proof}
Direct calculations yield
\begin{equation*}
\frac{d}{ds}\frac{\nu}{\tau} = - \frac{\tau^5+\tau^3\nu^2+h^2\tau^3+h^2\tau\nu^2+h^2\nu}{\tau^2(r^2+h^2)},
\end{equation*}
making the statement obvious.
\end{proof}

\begin{lem}
\label{nulim}
$\lim_{s\rightarrow \pm \infty}\nu = \mp \infty$.
\end{lem}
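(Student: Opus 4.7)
The plan is to argue by contradiction after reducing via the symmetry $s\mapsto -(\tau(-s),\nu(-s))$ noted after \eqref{Snuningshneppi} to showing $\lim_{s\to\infty}\nu=-\infty$. By Lemma \ref{limtil}, $\nu_\infty:=\lim_{s\to\infty}\nu$ exists in $[-\infty,+\infty]$, so I need only exclude $\nu_\infty\in\mathbf R$ and $\nu_\infty=+\infty$.

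The linchpin of the argument is the identity
\begin{equation*}
(r^2)' = 2\tau\tau' + 2\nu\nu' = 2\tau(1+k\nu) - 2k\tau\nu = 2\tau,
\end{equation*}
immediate from \eqref{TauNuDiffur}. As a preliminary step I rule out $\tau(s)<0$ for all large $s$: otherwise $r^2$ is eventually decreasing, so $\tau,\nu$ are bounded and converge to finite limits $\tau_\infty\leq 0$ and $\nu_\infty$; since $(r^2)'\to 2\tau_\infty$ and $r^2$ must stay nonnegative, necessarily $\tau_\infty=0$, but then the ODE gives $\tau'\to h^2/(\nu_\infty^2+h^2)>0$, contradicting $\tau\to 0^-$. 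Hence $\tau$ is eventually positive with limit $\tau_\infty\in[0,\infty]$.

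I then split on $\tau_\infty$. If $\tau_\infty=0$, then $\tau>0$ is eventually decreasing (from Lemma \ref{limtil}, $\tau$ has at most one critical point in $\{\tau>0\}$ and it must be a local max), so $\tau'\leq 0$ forces $1+\tau\nu\leq 0$ and therefore $\nu\leq -1/\tau\to -\infty$, done. If $\tau_\infty>0$, then $(r^2)'=2\tau$ gives $r^2\to\infty$; assuming $\nu_\infty\neq -\infty$, either $\nu_\infty\in\mathbf R$ with $\nu$ bounded (so $\tau\to\infty$), or $\nu_\infty=+\infty$, in which case Lemma \ref{ratio} implies $\nu/\tau$ is eventually decreasing and positive, hence bounded by some $M$, forcing $\tau\to\infty$ as well. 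In both subcases the ODE
\begin{equation*}
\nu' = \frac{\tau\nu - \tau^2(\tau^2+h^2)}{r^2+h^2}
\end{equation*}
has numerator dominated by $-\tau^4$ and denominator of order $\tau^2$, so $\nu'\leq -\tau^2/C$ eventually for some constant $C$, giving $\nu\to -\infty$ and contradicting the assumption in either subcase.

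The main obstacle is the regime $\tau_\infty=0$, where the dominant-balance estimate is unavailable; there one instead exploits the monotonicity of $\tau$ together with $\tau'\leq 0$ to derive $\nu\leq -1/\tau$. Everything else is routine once the identity $(r^2)'=2\tau$ is observed and Lemma \ref{ratio} is used to pin down the growth of $\tau$ in the $\nu\to+\infty$ scenario.
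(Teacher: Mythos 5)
Your proof is correct, and it reaches the conclusion by a somewhat different route than the paper. The paper splits on the value of $\lim_{s\to\infty}\nu$: if it is finite, the absence of equilibrium points of \eqref{Snuningshneppi} forces $\tau\to\pm\infty$, whence $k\to\pm\infty$ by \eqref{SnuningsGrunnjafna} and $\nu'=-k\tau\to-\infty$; if it is $+\infty$, the paper runs through the three possibilities for $\lim\tau$, invoking Lemma \ref{ratio} in the two nontrivial subcases exactly as you do. You instead split on $\lim\tau$ after first proving, via the identity $(r^2)'=2\tau$, that $\tau$ is eventually positive --- a fact the paper only extracts later, as a consequence of Lemmas \ref{klim} and \ref{rlim}. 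This buys you a direct, non-contradiction argument in the delicate regime $\tau\to 0^+$ (eventual monotonicity of $\tau$ gives $1+\tau\nu\le 0$, i.e.\ $\nu\le -1/\tau\to-\infty$), and it lets you merge the remaining possibilities into a single dominant-balance estimate $\nu'\lesssim-\tau^2\to-\infty$. The core mechanism in the hardest case --- Lemma \ref{ratio} pinning $\nu\le M\tau$ so that the $-\tau^4$ term in the numerator of $\nu'$ wins --- is the same in both proofs. One small citation point: the facts you use about the critical points of $\tau$ (at most one in $\{\tau>0\}$, necessarily a local maximum, and $\tau'>0$ at any zero of $\tau$) come from the \emph{proof} of Lemma \ref{limtil} rather than its statement, but they are indeed established there, so nothing is missing.
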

\begin{proof}
Assume $\lim_{s\rightarrow \infty}\nu$ is finite. Since there are no equilibrium points,  $\lim_{s\rightarrow\infty}\tau$ cannot also be finite so it has to be either $\infty$ or $-\infty$. But by \eqref{SnuningsGrunnjafna}, that implies $k$ goes to $\infty$ or $-\infty$ (respectively) and hence $\nu' = -k\tau$ goes to $-\infty$, a contradiction.

Assume $\lim_{s\rightarrow \infty}\nu = \infty$. If $\lim_{s\rightarrow \infty}\tau = -\infty$, then by \eqref{SnuningsGrunnjafna}, eventually $k<0$ and thus $\nu' = -k\tau <0$, a contradiction. Assume $\tau$ has a finite limit. By \eqref{SnuningsGrunnjafna}, eventually $k<0$ and since $-k\tau = \nu' >0$, $\tau$ eventually becomes positive. But Lemma \ref{ratio} then gives us an estimate $\nu \leq C \tau$, so $\tau$ is forced to go to $\infty$, a contradiction. Finally, assume $\tau$ goes to $\infty$. Then eventually both $\tau$ and $\nu$ are positive so by Lemma \ref{ratio} we get an estimate $\nu \leq C \tau$. By  \eqref{SnuningsGrunnjafna}, that implies $k\rightarrow \infty$ and thus $\nu' = -k\tau \rightarrow -\infty$, a contradiction.

The limits in the other direction follow by symmetry.
\end{proof}
\begin{lem}
\label{taulim}
$\tau$ has a finite limit in each direction.
\end{lem}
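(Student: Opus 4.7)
The plan is to show that each of the two bad cases $\tau \to +\infty$ and $\tau \to -\infty$ as $s \to +\infty$ contradicts what we already know; the case $s \to -\infty$ will then follow from the symmetry $s \mapsto -(\tau(-s),\nu(-s))$ of the system \eqref{Snuningshneppi} noted earlier.

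First I would record what is already in hand: by Lemma \ref{limtil}, $\tau$ has a (possibly infinite) limit in each direction, and in its proof it was shown that $\tau$ has at most two extrema on $\mathbf{R}$. By Lemma \ref{nulim}, $\nu \to -\infty$ as $s \to +\infty$. So, aiming for a contradiction, suppose $\tau(s) \to +\infty$ as $s \to +\infty$. Because $\tau$ has only finitely many extrema and tends to $+\infty$, it is eventually monotone increasing, so $\tau'(s) \geq 0$ for all sufficiently large $s$. But the first equation of \eqref{Snuningshneppi} gives
\begin{equation*}
\tau'(s) = \frac{\tau(s)^2+h^2}{r(s)^2+h^2}\bigl(1+\tau(s)\nu(s)\bigr),
\end{equation*}
and the prefactor is strictly positive, so $\tau'(s) \geq 0$ forces $\tau(s)\nu(s) \geq -1$ for all large $s$. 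This contradicts $\tau \to +\infty$, $\nu \to -\infty$, which together drive $\tau\nu \to -\infty$.

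Next I would rule out $\tau(s) \to -\infty$ as $s \to +\infty$ by a slightly easier sign argument. Eventually both $\tau < 0$ and $\nu < 0$, so $\tau\nu > 0$ and in particular $1+\tau\nu > 0$; the ODE for $\tau$ above then gives $\tau'(s) > 0$ for all large $s$, which is incompatible with $\tau \to -\infty$ (a function tending to $-\infty$ cannot be eventually increasing, again since it has only finitely many extrema). Combining the two cases, $\tau$ has a finite limit as $s \to +\infty$, and the limit as $s \to -\infty$ follows by applying the same argument to the reflected solution $s \mapsto -(\tau(-s),\nu(-s))$.

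The main subtlety, which I would want to state explicitly once, is the use of the finite-extrema fact from Lemma \ref{limtil} to convert the purely asymptotic hypothesis $\tau \to \pm\infty$ into the sign information $\tau' \geq 0$ (or $\tau' \leq 0$) eventually — without it, one cannot feed the limits back into the ODE to obtain a contradiction. Everything else is just reading off signs from \eqref{Snuningshneppi}, so no further obstacle is anticipated.
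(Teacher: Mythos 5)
Your proof is correct and follows essentially the same route as the paper: feed $\nu\to-\infty$ (Lemma \ref{nulim}) into the sign of $1+\tau\nu$ in the first equation of \eqref{Snuningshneppi} to contradict $\tau\to\pm\infty$, then use the symmetry for $s\to-\infty$. The only difference is that your appeal to the finite-extrema count is an unnecessary detour: in the case $\tau\to+\infty$ one has $1+\tau\nu<0$ eventually, hence $\tau'<0$ eventually, so $\tau$ is bounded above on $[s_0,\infty)$ by $\tau(s_0)$, which already contradicts $\tau\to+\infty$ directly.
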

\begin{proof}
If $\lim_{s \rightarrow \infty} \tau= \infty$, then, by Lemma \ref{nulim}, eventually $1+\tau\nu <0$ so $\tau' <0$ by \eqref{Snuningshneppi}, a contradiction. Similarly, if  $\lim_{s \rightarrow \infty} \tau= -\infty$ then eventually $1+\tau\nu >0$ so $\tau' >0$ by \eqref{Snuningshneppi}, a contradiction. The limits in the other direction follow by symmetry.
\end{proof}

\begin{figure}
  \subfloat[The symmetric curve ($h=1$)]{\label{fig:roth1y0}\includegraphics{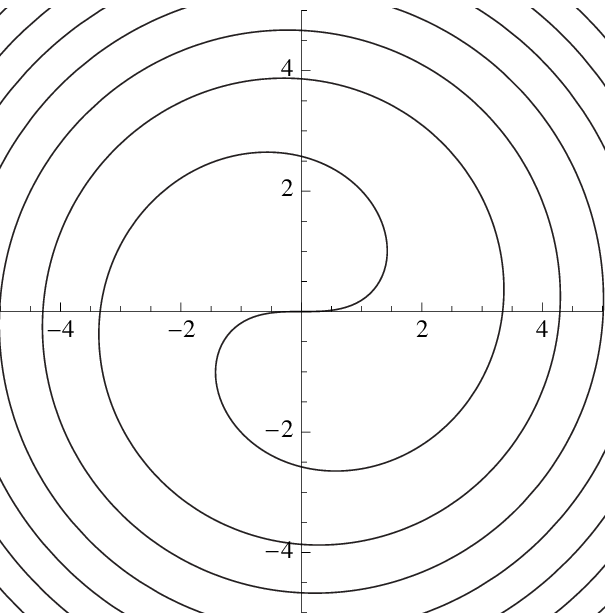}} \quad             
  \subfloat[Curve at distance 1 to the origin ($h=1$)]{\label{fig:roth1y1}\includegraphics{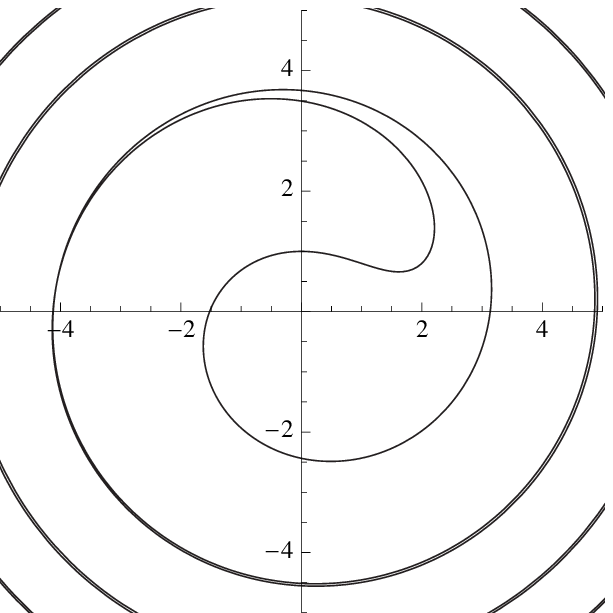}}\\
  \caption{The symmetric curve is always embedded but when the distance to the origin has reached a certain value (depending on $h$) the curve stops being embedded.}
  \label{fig:rot1}
\end{figure}

\begin{lem}
\label{klim}
$\lim_{s\rightarrow \pm \infty}k = 0\pm$.
\end{lem}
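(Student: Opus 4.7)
The plan is to read off $\lim k$ directly from the closed-form expression \eqref{SnuningsGrunnjafna} for the curvature, using the limits of $\tau$ and $\nu$ established in Lemmas \ref{nulim} and \ref{taulim}. No new differential inequality or monotonicity argument is needed at this stage; the work has already been done in identifying where $(\tau,\nu)$ goes.

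Concretely, I would rewrite \eqref{SnuningsGrunnjafna} as the single fraction
\begin{equation*}
k = \frac{\tau(\tau^2+h^2) - \nu}{\tau^2+\nu^2+h^2}.
\end{equation*}
By Lemma \ref{taulim}, $\tau$ stays bounded along the whole trajectory, while by Lemma \ref{nulim} we have $|\nu|\to\infty$ in both directions. The numerator is therefore dominated by $-\nu$ and the denominator by $\nu^2$, so dividing through by $\nu^2$ gives the asymptotic
\begin{equation*}
k = -\frac{1}{\nu}\cdot\frac{1 - \tau(\tau^2+h^2)/\nu}{1 + \tau^2/\nu^2 + h^2/\nu^2},
\end{equation*}
in which the second factor tends to $1$. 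Hence $k\sim -1/\nu$ and $k\to 0$ at both ends.

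The sign then follows immediately from Lemma \ref{nulim}: as $s\to+\infty$ we have $\nu\to-\infty$, so $-1/\nu>0$ for large $s$ and $k\to 0^{+}$; as $s\to-\infty$ we have $\nu\to+\infty$, so $k\to 0^{-}$. This is exactly the assertion $\lim_{s\to\pm\infty}k = 0\pm$, and it is also consistent with the at-most-one-zero analysis performed inside the proof of Lemma \ref{limtil}.

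There is essentially no obstacle here: the lemma is a direct corollary of the limit information already gathered. The only care needed is to factor the $-\nu$ and $\nu^2$ cleanly so as to extract the correct sign from a $0/0$-style expression, and the $1/|\nu|$ rate of decay that falls out of this computation is strong enough to be reused later when verifying that each arm carries infinite total curvature in Theorem \ref{Snuningssetning}.
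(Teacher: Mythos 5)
Your proof is correct and is exactly the paper's argument: the paper's proof of Lemma \ref{klim} simply states that the claim follows directly from \eqref{SnuningsGrunnjafna} together with Lemmas \ref{nulim} and \ref{taulim}, and your computation (numerator dominated by $-\nu$, denominator by $\nu^2$, so $k\sim -1/\nu$ with the signs read off from $\nu\to\mp\infty$) is just that deduction written out in full. Nothing is missing.
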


\begin{proof}
This follows directly from \eqref{SnuningsGrunnjafna} and Lemmas \ref{nulim} and \ref{taulim}. 
\end{proof}

\begin{lem}
\label{rlim}
$\lim_{s\rightarrow \pm \infty}r = \infty$ and $r$ has exactly one extremum, a global mininum.
\end{lem}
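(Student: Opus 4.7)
The plan is to combine the limit information from Lemmas \ref{nulim} and \ref{taulim} with the differential identity \eqref{rmat} and the sign behavior of $\tau$ already established in the proof of Lemma \ref{limtil}.

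For the first claim, since by Lemma \ref{taulim} the function $\tau$ has finite limits at $\pm \infty$, it is bounded, whereas by Lemma \ref{nulim} we have $\nu \to \mp\infty$ as $s \to \pm\infty$. Therefore
\begin{equation*}
r = \sqrt{\tau^2+\nu^2} \to \infty \quad \text{as } s \to \pm\infty.
\end{equation*}

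For the second claim, recall from \eqref{rmat} that wherever $r>0$,
\begin{equation*}
r' = \frac{\tau}{r},
\end{equation*}
so critical points of $r$ away from the origin correspond exactly to zeros of $\tau$. In the proof of Lemma \ref{limtil} it was already shown that $\tau$ has at most one zero, being negative before it and positive after it. Combined with $r \to \infty$ at both ends and the continuity of $r$, this forces $\tau$ to actually attain the value $0$ at some (unique) point $s_0$, at which $r$ attains its unique minimum. The only point requiring a brief separate remark is whether $r$ could vanish somewhere, in which case \eqref{rmat} does not directly apply; but $r(s_1)=0$ forces $\tau(s_1)=\nu(s_1)=0$, and then the ODE \eqref{Snuningshneppi} gives $\tau'(s_1)=1>0$, so $\tau$ changes sign from negative to positive at $s_1$ just as before, and $r$ still has a global minimum there (with value $0$) and no other critical points.

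The argument is essentially bookkeeping on top of the earlier lemmas; there is no substantial obstacle. The one subtlety worth flagging explicitly is handling the potential zero of $r$, but this is immediately dispatched using the ODE system itself.
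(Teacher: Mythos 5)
Your proof is correct and follows essentially the same route as the paper: derive the first claim from Lemma \ref{nulim}, and the second from the fact that the sign of $r'$ is governed by $\tau$, which has at most one zero (with a sign change) by the proof of Lemma \ref{limtil}. The only difference is that the paper works with $\frac{d}{ds}r^2 = 2\tau$, which sidesteps the $r=0$ issue you correctly flag and dispatch.
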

\begin{proof}
The first statement follows from Lemma \ref{nulim}, since $r^2 = \tau^2+\nu^2$. The second one follows from the fact that $\frac{d}{ds}r^2 = 2\tau$ and the observation in the proof of Lemma \ref{limtil} that $\tau$ has at most one zero.
\end{proof}

A consequence of the previous two lemmas is that each of $\tau$ and $k$ does indeed have a zero, and is negative before it and positive after it.

Lemma \ref{rlim} implies that $X$ has a unique point closest to the origin and consists of two arms coming out from this point and strictly going away to infinity. Hence, each of these arms is properly embedded but they can cross each other. The limiting growing direction of the arms is given by the following lemma.

\begin{lem}
\label{growdir}
$\lim_{s\rightarrow \pm \infty}\frac{rT}{X} = \pm i$.
\end{lem}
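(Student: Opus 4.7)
The plan is to reduce the limit to a direct computation using the identity $X = (\tau + i\nu)T$ from \eqref{Grunnjafna}, together with the limits of $\tau$ and $\nu$ already established.

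First I would rewrite $\frac{rT}{X}$ in terms of $\tau$ and $\nu$ only. Since $X = (\tau + i\nu)T$ and $|T| = 1$, we have
\begin{equation*}
\frac{rT}{X} = \frac{r}{\tau + i\nu} = \frac{r(\tau - i\nu)}{\tau^2 + \nu^2} = \frac{\tau - i\nu}{r}.
\end{equation*}
So the claim reduces to showing $\frac{\tau}{r} \to 0$ and $\frac{\nu}{r} \to \mp 1$ as $s \to \pm\infty$.

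Next I would apply the earlier lemmas. By Lemma \ref{taulim}, $\tau$ has a finite limit in each direction, so $\tau$ stays bounded. By Lemma \ref{nulim}, $\nu \to \mp\infty$ as $s \to \pm\infty$, and hence $r = \sqrt{\tau^2+\nu^2} \to \infty$ with $r/|\nu| \to 1$. The boundedness of $\tau$ then forces $\tau/r \to 0$, while $\nu/r$ has the same sign as $\nu$ for large $|s|$ and tends to $\mp 1$ as $s \to \pm\infty$. Combining these gives $\frac{\tau - i\nu}{r} \to 0 - i(\mp 1) = \pm i$, as required.

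There is really no obstacle here: once the algebraic identity $\frac{rT}{X} = \frac{\tau - i\nu}{r}$ is in hand, the statement is an immediate consequence of Lemmas \ref{nulim} and \ref{taulim}. The only mild care needed is keeping track of signs when passing between the two directions, which is handled either directly or by invoking the symmetry $s \mapsto -(\tau(-s),\nu(-s))$ noted after system \eqref{Snuningshneppi}.
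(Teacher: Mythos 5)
Your proof is correct and follows the paper's argument exactly: the paper likewise observes that $\frac{rT}{X} = \frac{\tau - i\nu}{r}$ and deduces the limit directly from Lemmas \ref{nulim} and \ref{taulim}. Your write-up just spells out the sign bookkeeping that the paper leaves implicit.
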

\begin{proof}
This follows directly from Lemmas \ref{nulim} and \ref{taulim}, since $\frac{rT}{X} = \frac{\tau-i\nu}{r}$.
\end{proof}

Now, let $\phi$ be the angle of $X$, i.e., $X = re^{i\phi}$. Then we have the following.
\begin{lem}
\label{philim}
$\lim_{s\rightarrow \pm \infty}\phi = + \infty$.
\end{lem}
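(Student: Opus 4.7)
The plan is to derive a clean ODE for $\phi$ directly from the parametrization and then show that $\phi'$ has a non-integrable positive lower bound for large $|s|$. Writing $X = re^{i\phi}$ and using $X' = T = e^{i\theta}$ together with $X = (\tau + i\nu)e^{i\theta}$ (which forces $\tau + i\nu = re^{i(\phi - \theta)}$), I would differentiate $X = re^{i\phi}$ to obtain $X' = (r' + ir\phi')e^{i\phi}$ and match this with $e^{i(\theta - \phi)} = (\tau - i\nu)/r$. This yields the identities $r' = \tau/r$ and the key formula
\begin{equation*}
\phi' \;=\; -\frac{\nu}{r^2}.
\end{equation*}

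Next I would exploit the asymptotics already established. By Lemma \ref{nulim}, $\nu \to -\infty$ as $s \to +\infty$, and by Lemma \ref{taulim}, $\tau$ has a finite limit and in particular is bounded. Hence for all sufficiently large $s$ we have $\nu < 0$ and $\tau^2 \leq \nu^2$, so $r^2 = \tau^2 + \nu^2 \leq 2\nu^2$, giving the pointwise lower bound $\phi'(s) \geq \tfrac{1}{2}\abs{\nu(s)}^{-1}$. To bound $\abs{\nu}$ from above, I would use $\nu' = -k\tau$ together with the boundedness of $k$ (Lemma \ref{klim}) and of $\tau$ to conclude that $\abs{\nu}$ grows at most linearly in $s$. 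Combining, $\phi'(s) \geq c/s$ for $s$ large, and since $\int^\infty ds/s = \infty$ this forces $\phi(s) \to +\infty$.

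For $s \to -\infty$, I would invoke the symmetry $(\tau(s),\nu(s)) \mapsto (-\tau(-s),-\nu(-s))$ noted after \eqref{Snuningshneppi}: this corresponds to replacing $X$ by the curve $s \mapsto -X(-s)$, whose angle differs from $\phi(-s)$ by a constant, so the already-proven divergence as $s\to+\infty$ for the reflected curve translates immediately into divergence as $s \to -\infty$ for the original. The main obstacle is spotting the clean identity $\phi' = -\nu/r^2$; once that is in hand, everything else is a routine majorization using the uniform bounds collected in the preceding lemmas.
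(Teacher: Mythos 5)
Your proof is correct. You and the paper start from the same key identity $\phi' = -\nu/r^2$ (together with $r' = \tau/r$), but the way you extract divergence from it is genuinely different. The paper reparametrizes by $\log r$, computing $\frac{d\phi}{d\log r} = -\frac{\nu}{\tau}$, which tends to $+\infty$ in each direction by Lemmas \ref{nulim} and \ref{taulim}; since $\log r \to \infty$ along each arm by Lemma \ref{rlim}, the conclusion is immediate, with no need to control how fast $\nu$ grows. You keep $s$ as the variable, and therefore must supply an extra ingredient: the linear upper bound $\abs{\nu(s)} \leq C s$, obtained from $\nu' = -k\tau$ with $k$ and $\tau$ bounded, which turns $\phi' = \abs{\nu}/r^2 \geq 1/(2\abs{\nu})$ into the non-integrable bound $\phi' \geq c/s$. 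Your route is a bit longer but more elementary and has the side benefit of an explicit (logarithmic) lower bound on the rate at which $\phi$ diverges; the paper's change of variables is a one-line argument but gives no rate. One cosmetic point: the paper identifies the symmetry $s \mapsto -(\tau(-s),\nu(-s))$ with $X$ traversed backwards, $s \mapsto X(-s)$, whereas you use $s \mapsto -X(-s)$; since the system \eqref{Snuningshneppi} determines $X$ only up to a rotation, both representatives are valid, and in either case the angle differs from $\phi(-s)$ by a constant, so your reduction of the $s \to -\infty$ case is sound.
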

\begin{proof}
Since $\frac{d\phi}{d\log(r)} = r\frac{d\phi/ds}{dr/ds}=r\frac{-\nu/r^2}{\tau/r} =-\frac{\nu}{\tau}$, which goes to $\infty$ in each direction, the result follows from Lemma \ref{rlim}.
\end{proof}

\begin{figure}
\subfloat[Curve at distance 1 to the origin ($h=5$)]{\label{fig:roth5y1}\includegraphics{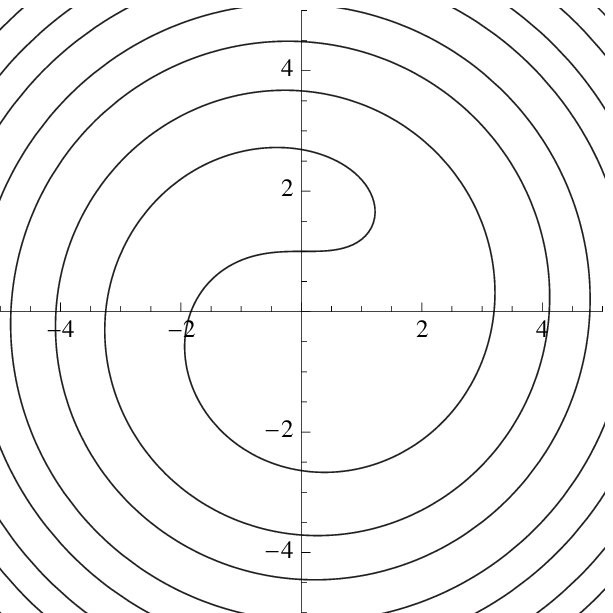}}
\quad
\subfloat[Curve passing close to the origin ($h=\frac{1}{16}$)]{\label{fig:roth00625y1x1}\includegraphics{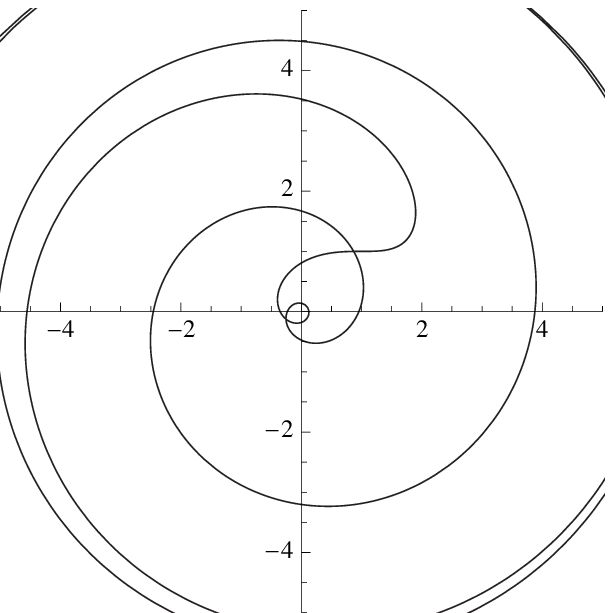}}
\caption{As $h$ goes to $\infty$, the curves converge to the rotating curves of the curve shortening flow. When $h$ is close to 0, the curvature can increase dramatically where the curve passes close to the origin.}
\label{fig:rot2}
\end{figure}

This means each arm spirals infinitely many times around the origin. Moreover, we have the following.

\begin{lem}

$\lim_{s \rightarrow \pm \infty}\theta = +\infty$.
In other words, $\int_{s_0}^\infty k ds = +\infty$ and $\int_{-\infty}^{s_0}k ds = -\infty$, so each arm has infinite total curvature.
\end{lem}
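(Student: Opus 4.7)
The plan is to decompose the tangent angle $\theta$ as
\begin{equation*}
\theta = \phi - (\phi - \theta)
\end{equation*}
and exploit Lemma \ref{philim}, which has already established $\phi \to +\infty$ in both directions. Combining \eqref{Grunnjafna} with the definition $X = re^{i\phi}$ of the polar angle $\phi$, and remembering that $T = e^{i\theta}$, we get
\begin{equation*}
\tau + i\nu = r\, e^{i(\phi - \theta)},
\end{equation*}
so $\phi - \theta$ is (a continuous lift of) the argument of the point $(\tau(s),\nu(s))$ in the plane. Hence the whole problem reduces to controlling this argument in the limits $s \to \pm\infty$.

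The key step is to show that $\phi - \theta$ remains bounded (in fact, has a finite limit) as $s \to \pm\infty$. By Lemma \ref{taulim}, $\tau$ has a finite limit in each direction, and by Lemma \ref{nulim}, $\nu \to -\infty$ as $s \to +\infty$ while $\nu \to +\infty$ as $s \to -\infty$. Therefore, for $|s|$ sufficiently large, the point $(\tau(s),\nu(s))$ lies strictly in the lower (respectively upper) half-plane and its $\nu$-coordinate tends monotonically to $\mp\infty$ while its $\tau$-coordinate stays bounded. Consequently the continuous lift of $\arg(\tau+i\nu)$ cannot wind around the origin for such $s$ and must converge to the value $\mp\pi/2$ modulo $2\pi$. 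This incidentally verifies the final claim of Theorem \ref{Snuningssetning} about the limiting angle between the tangent and the location being $\pi/2$.

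Combining the bound on $\phi - \theta$ with Lemma \ref{philim}, we conclude $\theta = \phi - (\phi-\theta) \to +\infty$ as $s \to \pm\infty$. Since $k = \theta'$ (as established in the proof of Lemma \ref{FerillFenginn}), the fundamental theorem of calculus yields
\begin{equation*}
\int_{s_0}^\infty k\, ds = \lim_{s\to+\infty}\theta(s) - \theta(s_0) = +\infty,
\end{equation*}
and similarly $\int_{-\infty}^{s_0} k\, ds = \theta(s_0) - \lim_{s\to-\infty}\theta(s) = -\infty$.

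The only subtlety is the winding-number issue in the middle step, but this is dispatched without effort: once $\nu$ has definite sign and $|\nu|$ grows without bound while $|\tau|$ stays bounded, the trajectory of $(\tau,\nu)$ is confined to a half-plane and the continuous argument is trapped in a single branch. No further analytic work is required.
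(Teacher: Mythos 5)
Your proof is correct and follows essentially the same route as the paper: the paper's one-line proof invokes the identity $\phi = \theta + \arg(\tau+i\nu)$, Lemma \ref{philim}, and the fact that $\arg(\tau+i\nu)$ has a finite limit in each direction, which is exactly your decomposition $\theta = \phi - (\phi-\theta)$ with $\phi - \theta = \arg(\tau+i\nu)$. You merely spell out the (correct) justification, via Lemmas \ref{nulim} and \ref{taulim}, that the continuous lift of $\arg(\tau+i\nu)$ converges to $\mp\pi/2$ modulo $2\pi$, a fact the paper asserts without detail and which also underlies its Lemma \ref{growdir}.
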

\begin{proof}
Follows from the identity $\phi = \theta + \arg(\tau+i\nu)$, Lemma \ref{philim} and the fact that $\arg(\tau+i\nu)$ has finite limit in each direction.
\end{proof}
This concludes the proof of Theorem \ref{Snuningssetning}.\\

Our parametrization for the helicoidal surfaces does not remain valid in the case $h=0$, but we can nevertheless investigate what happens to the generating curves in the limit $h \rightarrow 0$. When $h=0$, Equations \eqref{SnuningsGrunnjafna} and \eqref{Snuningshneppi} become
\begin{equation*}
k = \frac{\tau^3-\nu}{r^2} \quad\quad \text{and}\quad \quad\left\{
\begin{aligned}
\tau' &= \frac{\tau^2}{r^2}(1+\tau\nu) \\
\nu'&= \frac{\tau\nu-\tau^4}{r^2}.
\end{aligned}
\right.
\end{equation*}

In this ODE system, every point on the $\nu$-axis except for (0,0) is a fixed point. These fixed points correspond to solutions in which the curve $X$ is a circle around the origin. The other trajectories are found by noticing that the function $\frac{\nu}{\tau} + \frac 1 2 r^2$ is constant. Therefore, the trajectories are the algebraic curves $\tau^3 + \tau\nu^2+ 2\nu = 2a\tau$ where $a$ is any real constant, representing the slope of the trajectory as it goes through the origin. Unless $a=0$, the curvature $k$ blows up as the trajectory goes through the origin.
Therefore, we can only take half of the trajectory to create the smooth curve $X$. That curve $X$ is embedded, has one end spiraling out to infinity and the other one spiraling into the origin in finite time where its curvature blows up. When $a=0$, we get a complete embedded curve $X$.

These curves give us an idea about the limiting behaviour of our generating curves as $h\rightarrow 0$. In the case of the circular curves corresponding to the fixed points on the $\nu$-axis, it is easy to make a precise convergence statement.

\begin{thm}
\label{samleitni}
Assume $\Phi$ is a smooth function on $\mathbf R^3 \setminus \{(0,0,0)\}$ and $A\neq0$. For each $h\geq0$, let $(\tau_h, \nu_h)$ be the solution to the initial value problem
\begin{equation*}
\left\{
\begin{aligned}
& \tau_h' = 1 + \nu_h\Phi(\tau_h,\nu_h,h)\\
& \nu_h' = -\tau_h\Phi(\tau_h,\nu_h,h)\\
& \tau_h(0) = 0 \\
& \nu_h(0) = A.
& \end{aligned}
\right.
\end{equation*}
If $(\tau_0,\nu_0)$ is the constant solution $(0,A)$, then as $h \rightarrow 0$, $(\tau_h, \nu_h) \rightarrow (\tau_0,\nu_0)$ in the sense of uniform $C^k$-convergence on compact subsets of $\mathbf R$ for each k.
\end{thm}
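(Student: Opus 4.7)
The plan is to reduce the statement to the classical continuous-dependence-on-parameters theorem for ODEs, and then bootstrap from $C^0$ to $C^k$ convergence by differentiating the ODE itself.

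Writing the system compactly as $\mathbf{y}_h' = F(\mathbf{y}_h, h)$ with $\mathbf{y}_h = (\tau_h,\nu_h)$ and
\begin{equation*}
F(\tau,\nu,h) = \bigl(1 + \nu\Phi(\tau,\nu,h),\, -\tau\Phi(\tau,\nu,h)\bigr),
\end{equation*}
I would first observe that since $A\neq 0$, the point $(0,A,0)$ lies in $\mathbf R^3\setminus\{(0,0,0)\}$, so $\Phi$, and hence $F$, is smooth in an open neighborhood $U$ of $(0,A,0)$. The hypothesis that $(0,A)$ is a constant solution at $h=0$ forces $F((0,A),0) = 0$ (equivalently $\Phi(0,A,0) = -1/A$), so $(0,A)$ is an equilibrium of the limiting vector field.

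Fix $T>0$. Since the constant solution $s\mapsto(0,A)$ of the $h=0$ system is defined on $[-T,T]$ and stays inside $U$, the standard continuous-dependence theorem for ODEs with a parameter yields an $h_0>0$ such that for every $0\leq h\leq h_0$ the solution $\mathbf{y}_h$ exists on $[-T,T]$, remains in a fixed compact subset $K\subset U$, and satisfies $\mathbf{y}_h\to(0,A)$ uniformly on $[-T,T]$ as $h\to 0$. (If preferred, this step can be carried out by hand via a Gronwall estimate, using that $F$ is Lipschitz in $\mathbf{y}$ on $K$ and that $F((0,A),h)\to F((0,A),0)=0$ as $h\to 0$.)

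To upgrade to $C^k$-convergence, I would proceed by induction on $k$. The $k=0$ case is the previous paragraph. Since $F$ and all its derivatives are uniformly continuous on $K\times[0,h_0]$, the ODE $\mathbf{y}_h'=F(\mathbf{y}_h,h)$ together with $\mathbf{y}_h\to(0,A)$ uniformly gives $\mathbf{y}_h'\to F((0,A),0)=0$ uniformly on $[-T,T]$, establishing $C^1$-convergence. In general, differentiating the ODE $j$ times expresses $\mathbf{y}_h^{(j+1)}$ as a fixed polynomial in partial derivatives of $F$ evaluated at $(\mathbf{y}_h,h)$ and the lower-order derivatives $\mathbf{y}_h,\mathbf{y}_h',\dots,\mathbf{y}_h^{(j)}$; the induction hypothesis then propagates uniform convergence to $\mathbf{y}_h^{(j+1)}$.

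The only real obstacle is ensuring that $\mathbf{y}_h$ stays inside the smoothness domain $U$ of $\Phi$ throughout $[-T,T]$; this is exactly what the continuous-dependence theorem supplies, and the argument works precisely because the limit trajectory is the single point $(0,A)$, which sits in $U$ with positive distance to the origin. Beyond this, the entire proof is routine ODE theory, and there are no sharp estimates or geometric subtleties to overcome.
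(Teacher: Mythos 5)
Your proposal is correct, and it reaches the conclusion by a route that is organized differently from the paper's, even though both ultimately rest on a Gr\"onwall-type estimate. The paper does not invoke the continuous-dependence theorem as a black box; instead it proves an explicit quantitative estimate (its inequality \eqref{klemma}), giving $\bigl|(\tau_h^{(k)},\nu_h^{(k)})-(\tau_0^{(k)},\nu_0^{(k)})\bigr|\leq h\,C(k,s)$ on any interval where the trajectory stays in a fixed annulus $K$ around the origin, and then controls containment in $K$ by an induction on overlapping intervals: the structural bound $|r_h'|\leq 1$ (which follows from the special form $\tau'=1+\nu\Phi$, $\nu'=-\tau\Phi$) guarantees that once $(\tau_h,\nu_h)$ has been forced into a small ball $L$ around $(0,A)$, it cannot leave $K$ for a definite additional time independent of $h$, so the interval of validity can be extended by a fixed amount at each step. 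You bypass this entirely by citing the classical continuous-dependence theorem on each compact interval $[-T,T]$, whose internal tube argument supplies the containment without using $|r_h'|\leq 1$; your $C^k$ upgrade by repeatedly differentiating the ODE is then essentially identical to the paper's recursive definition of $F_k$ and $G_k$. Your version is shorter, more modular, and does not depend on the particular structure of the system; the paper's version is self-contained and yields an explicit $O(h)$ rate of convergence in every $C^k$ norm, which the qualitative theorem does not provide (though the statement being proved does not require a rate). Your observation that the hypothesis forces $\Phi(0,A,0)=-1/A$, so that $(0,A)$ is an equilibrium of the limiting field sitting at positive distance from the singular point of $\Phi$, is exactly the point that makes either argument go through.
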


\begin{proof}
Here is a sketch of the proof. Let $K$ be the annulus in the $\tau\nu$-plane defined by $\frac 1 2 |A| \leq r \leq \frac 3 2 |A|$ and let $L$ be the ball or radius $\frac 1 4 |A|$ around $(0,A)$. By using the fundamental estimate \eqref{klemma} below, we get uniform  $C^k$-convergence to the constant solution $(0,A)$ on any interval where $(\tau_h,\nu_h)\in K$. In particular, $(\tau_h,\nu_h)$ goes uniformly into $L$. Then, since $|r_h'| \leq 1$ by \eqref{rmat}, we can extend uniformly by a fixed amount the interval where $(\tau_h,\nu_h)\in K$. Repeating this process, we manage to cover all of $\mathbf R$.\\

To simplify notation, let
\begin{equation*}
F_0(\tau,\nu,h)= 1 + \nu\Phi(\tau,\nu,h), \quad G_0(\tau,\nu,h)=-\tau\Phi(\tau,\nu,h)
\end{equation*}
and for $k\geq 0$
\begin{equation*}
F_{k+1} = \frac{\partial F_k}{\partial \tau}F_0 + \frac{\partial F_k}{\partial \nu}G_0, \quad 
G_{k+1} = \frac{\partial G_k}{\partial \tau}F_0 + \frac{\partial G_k}{\partial \nu}G_0.
\end{equation*}
The functions $F_k$ and $G_k$ are smooth on $\mathbf R^3 \setminus \{(0,0,0)\}$ and
\begin{equation}
\label{AukidHneppi}
\left\{
\begin{aligned}
& \tau_h^{(k+1)} = F_k(\tau_h,\nu_h,h)\\
& \nu_h^{(k+1)} = G_k(\tau_h,\nu_h,h).
& \end{aligned}
\right.
\end{equation}
Define the constants $D_0=0$ and for $k \geq 0$,
\begin{equation*}
D_{k+1}=\sup_{K \times [0,1]}(|\nabla F_k|+|\nabla G_k|).
\end{equation*}

\begin{figure}
\subfloat[Curve at distance 1 to the origin ($h=\frac 1 8$)]{\label{fig:roth0125y1}\includegraphics{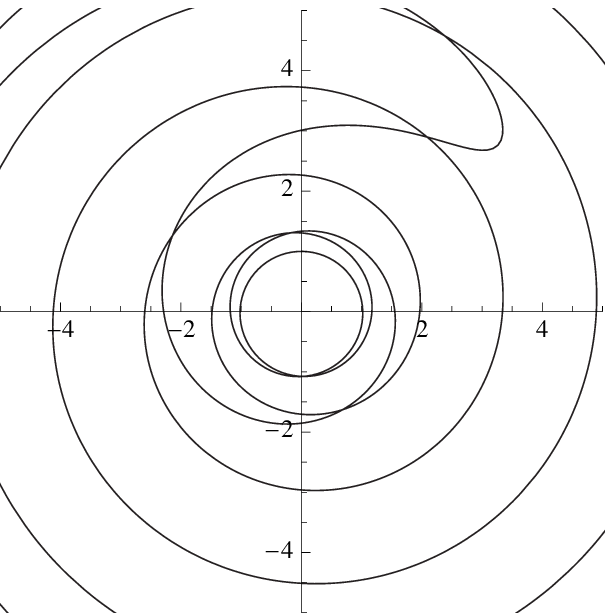}} \quad
\subfloat[Curve at distance 1 to the origin ($h=\frac{1}{16}$)]{\label{fig:roth00625y1}\includegraphics{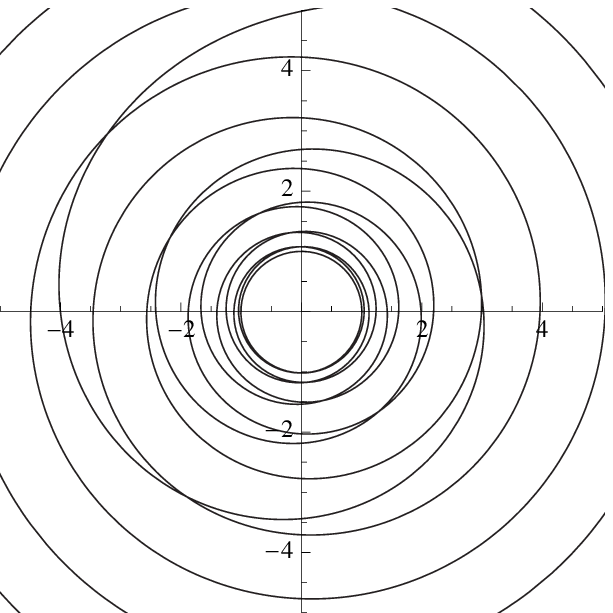}} 
\caption{As $h$ decreases to $0$, the curve at distance $1$ to the origin converges to the unit circle.}
\label{fig:rot3}
\end{figure}

Now, imagine we fix some $h\in [0,1]$ and let $I$ be an interval containing $0$ such that $(\tau_h(s),\nu_h(s)) \in K$ for all $s\in I$. Then, by the fundamental theorem of calculus and \eqref{AukidHneppi}, we have for $s \in I$
\begin{equation*}
 \begin{aligned}
 |\tau_h^{(k)}(s)-\tau_0^{(k)}(s)| &\leq  |\tau_h^{(k)}(0)-\tau_0^{(k)}(0)|+\int_0^s|\tau_h^{(k+1)}(t)-\tau_0^{(k+1)}(t)|dt \\
 &\leq hD_k + \int_0^s |F_k(\tau_h(t),\nu_h(t),h)-F_k(\tau_0(t),\nu_0(t),0)|dt\\
 &\leq hD_k + \int_0^s|F_k(\tau_h(t),\nu_h(t),h)-F_k(\tau_h(t),\nu_h(t),0)| dt \\&+ \int_0^s|F_k(\tau_h(t),\nu_h(t),0)-F_k(\tau_0(t),\nu_0(t),0)| dt.
 \end{aligned}
\end{equation*}
Here and below, the limits of integration should be swapped if $s<0$. Now, 
the first integrand is bounded by $h\sup_{K \times [0,1]}|\nabla F_k|$. The second is bounded by $|(\tau_h(t),\nu_h(t))-(\tau_0(t),\nu_0(t))| \frac \pi 2 \sup_{K \times \{0\}}|\nabla F_k|$, since every two points $p,q \in K$ can be joined by a piecewise smooth path in $K$ of length less than or equal to $\frac  \pi 2 |p-q|$. Therefore, if we put $C_k = \pi D_{k+1}$, the estimate above and the corresponding one for $\nu$ yield for $s \in I$
\begin{equation}
\label{lokamat}
\begin{aligned}
&\left|(\tau_h^{(k)}(s),\nu_h^{(k)}(s))-(\tau_0^{(k)}(s),\nu_0^{(k)}(s))\right|  \\ & \leq 2hD_k + C_k \int_0^s h +  |(\tau_h(t),\nu_h(t))-(\tau_0(t),\nu_0(t))| dt.
\end{aligned}
\end{equation}
In the case $k=0$, we let $u(s) = h + |(\tau_h(s),\nu_h(s))-(\tau_0(s),\nu_0(s))|$ and then \eqref{lokamat} becomes
\begin{equation*}
u(s) \leq h + C_0 \int_0^s u(t)dt, \quad s \in I.
\end{equation*}
By Gr\"onwall's inequality, this yields
\begin{equation*}
u(s) \leq h e^{C_0|s|}, \quad s \in I
\end{equation*}
or equivalently
\begin{equation*}
\left|(\tau_h(s),\nu_h(s))-(\tau_0(s),\nu_0(s))\right| \leq h(e^{C_0|s|}-1), \quad s \in I.
\end{equation*}
Putting this into \eqref{lokamat} yields our fundamental estimate, valid for all $k$
\begin{equation}
\label{klemma}
\left|(\tau_h^{(k)}(s),\nu_h^{(k)}(s))-(\tau_0^{(k)}(s),\nu_0^{(k)}(s))\right| \\  \leq h \left(2D_k + \tfrac{C_k}{C_0}(e^{C_0|s|}-1)\right), \, s \in I.
\end{equation}

We are now ready to finish the proof. Let $I_1$ be the interval $[-\frac 1 2 |A|, \frac 1 2 |A|]$. Since $|r_h'| \leq 1$ it is clear that $(\tau_h(s),\nu_h(s))\in K$ for all $s \in I_1$ and $h \in [0,1]$. Therefore, as $h \rightarrow 0$, we get by \eqref{klemma} that  $(\tau_h^{(k)},\nu_h^{(k)}) \rightarrow (\tau_0^{(k)},\nu_0^{(k)})$ uniformly on $I_1$ for each $k$. Pick $\delta_1$ so small that $(\tau_h(s),\nu_h(s)) \in L$ for all $s \in I_1$ and $h \in [0,\delta_1]$.

We continue by induction. Assume that for some $n\geq 1$ we have $(\tau_h(s),\nu_h(s)) \in L$ for all $s \in I_{n}$ and $h \in [0,\delta_{n}]$. Let $I_{n+1}$ be the closed interval obtained from $I_{n}$ by extending it by $\frac 1 4 |A|$ in each direction. Since $|r_h'| \leq 1$ we have $(\tau_h(s),\nu_h(s)) \in K$ for all $s \in I_{n+1}$ if $h \in [0,\delta_{n}]$. Then as $h \rightarrow 0$, by \eqref{klemma}, $(\tau_h^{(k)},\nu_h^{(k)}) \rightarrow (\tau_0^{(k)},\nu_0^{(k)})$ uniformly on $I_{n+1}$ for each $k$. Finally, we pick $\delta_{n+1}$ so small that $(\tau_h(s),\nu_h(s)) \in L$ for all $s \in I_{n+1}$ if $h \in [0,\delta_{n+1}]$.
\end{proof}

\begin{cor}
Under the assumptions of Theorem \ref{samleitni}, let $X_h$ be the curve corresponding to ($\tau_h,\nu_h)$ for some fixed choice of $\theta_0$. Then, as $h \rightarrow 0$, $X_h$ converges to the circle $s \mapsto iAe^{-i\frac s A + i\theta_0}$, $C^k$-uniformly on compact subsets of $\mathbf R$ for each $k$.
\end{cor}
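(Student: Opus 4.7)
The plan is to reduce the corollary to Theorem \ref{samleitni} via the explicit formula $X_h(s) = (\tau_h(s)+i\nu_h(s))e^{i\alpha_h(s)}$ produced in the proof of Lemma \ref{FerillFenginn}, where $\alpha_h$ denotes the angle function with $\alpha_h(0)=\theta_0$ and $\alpha_h'(s)=\Phi(\tau_h(s),\nu_h(s),h)$. The corollary will follow once I show that both factors converge $C^k$-uniformly on compact subsets.

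The first step is to identify the limit. For the constant solution $(\tau_0,\nu_0)\equiv(0,A)$, the equation $\tau_0'=1+\nu_0\Phi(\tau_0,\nu_0,0)\equiv 0$ forces $\Phi(0,A,0)=-1/A$; in particular $(0,A,0)\neq(0,0,0)$ since $A\neq 0$, so we are in the smooth domain of $\Phi$. Consequently $\alpha_0'\equiv -1/A$, giving $\alpha_0(s)=\theta_0-s/A$ and $X_0(s)=(iA)e^{i(\theta_0-s/A)}=iAe^{-is/A+i\theta_0}$, which is exactly the target circle.

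Next, I promote the $C^k$-convergence $(\tau_h,\nu_h)\to(\tau_0,\nu_0)$ supplied by Theorem \ref{samleitni} to $C^k$-convergence $\alpha_h\to\alpha_0$ on any compact interval $J\subset\mathbf R$. For $h$ sufficiently small, $(\tau_h,\nu_h)(J)$ lies in a fixed compact neighborhood of $(0,A)$ bounded away from the origin, on which $\Phi$ and all of its partial derivatives are smooth and uniformly bounded, and on which $\Phi(\cdot,\cdot,h)\to\Phi(\cdot,\cdot,0)$ in $C^k$ as $h\to 0$. Differentiating the relation $\alpha_h'=\Phi(\tau_h,\nu_h,h)$ and applying the chain rule expresses each $\alpha_h^{(j)}$ for $j\geq 1$ as a polynomial in the partial derivatives of $\Phi$ and the derivatives of $\tau_h,\nu_h$; every ingredient converges uniformly on $J$, so $\alpha_h^{(j)}\to\alpha_0^{(j)}$ uniformly on $J$. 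Integrating and using the common initial condition $\alpha_h(0)=\theta_0$ then yields uniform convergence of $\alpha_h$ itself.

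Finally, since complex multiplication and $z\mapsto e^{iz}$ are smooth, the $C^k$-uniform convergences $(\tau_h,\nu_h)\to(0,A)$ and $\alpha_h\to\alpha_0$ combine to give $X_h=(\tau_h+i\nu_h)e^{i\alpha_h}\to iAe^{i(\theta_0-s/A)}$ $C^k$-uniformly on $J$, which is the assertion. The only mildly delicate point is the bootstrapping of convergence for the angle $\alpha_h$, since it enters through a composition with the exponential and its derivatives are obtained by differentiating $\Phi$ along a moving curve; but this is routine chain-rule bookkeeping once Theorem \ref{samleitni} is in hand, so there is no genuine obstacle.
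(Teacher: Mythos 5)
Your proposal is correct and follows essentially the same route as the paper, which writes $X_h(s)=(\tau_h(s)+i\nu_h(s))e^{i\theta_h(s)}$ with $\theta_h(s)=\int_0^s\Phi(\tau_h,\nu_h,h)\,dt+\theta_0$ and declares the result to follow directly from Theorem \ref{samleitni}. You merely fill in the (correct) details the paper leaves implicit: the identification $\Phi(0,A,0)=-1/A$ pinning down the limit circle, and the chain-rule bookkeeping promoting $C^k$-convergence of $(\tau_h,\nu_h)$ to that of the angle function and hence of $X_h$.
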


\begin{proof}

Since $X_h(s) = (\tau_h(s)+i\nu_h(s))e^{i\theta_h(s)}$, where $\theta_h(s) = \int_0^sk_h(t)dt + \theta_0$ and $k_h(s) = \Phi(\tau_h(s),\nu_h(s),h)$, the result follows directly from Theorem \ref{samleitni}.
\end{proof}

For $h > 0$, circular generating curves around $(0,0)$ correspond to cylinders so this result can in some sense be interpreted as our surfaces converging to a cylinder.

\section{Helicoidal minimal surfaces}
\label{minimalsurfaces}

Wunderlich studied the helicoidal minimal surfaces in \cite{wund}. We include them here to investigate their limit behaviour as $h \rightarrow 0$.

From Equation \eqref{meancurvature} we see that the curves that generate the helicoidal minimal surfaces are given by
\begin{equation}
\label{LagmarksGrunnjafna}
k = -\frac{\nu}{r^2+h^2}.
\end{equation}
Hence, the two-dimensional system of ODEs for $\tau$ and $\nu$ becomes
\begin{equation}
\label{Lagmarkshneppi}
\left\{
\begin{aligned}
\tau' &= \frac{\tau^2+h^2}{r^2+h^2} \\
\nu'&= \frac{\tau\nu}{r^2+h^2}.
\end{aligned}
\right.
\end{equation}

The limit case $h =  \infty$ yields the equation $k=0$. That means $X$ is just a straight line and the surface $\Sigma$ becomes a plane, an embedded minimal surface.

In the general case, direct calculations yield that the function $\frac{\nu}{\sqrt{\tau^2+h^2}}$ is constant. That gives $\tau = \frac{h^2s}{A^2+h^2}$ and by solving for $\nu$, $k$ and $\theta$ in terms of $\tau$ we get that the curve is given by
\begin{equation}
\label{lagmarksferill}
X = (\tau + i \tfrac{A}{h}\sqrt{\tau^2+h^2})e^{-i\frac{A}{h} \text{arsinh}\frac{\tau}{h} +i\theta_0},\quad \tau \in \mathbf R,
\end{equation}
where $A$ in any real constant.
Letting $u = \frac 1 h \text{arsinh}\frac{\tau}{h}$ yields Wunderlich's parametrization
\begin{equation*}
X = (h\sinh hu + iA\cosh hu)e^{-i A u + i \theta_0},\quad u \in \mathbf R.
\end{equation*}
These curves appear in Figure \ref{fig:min1}.

Note that $|A|$ gives the distance of the curve from the origin. When $A=0$, $X$ is just a straight line through the origin so the surface $\Sigma$ is the helicoid, the well-known embedded minimal surface (see \cite{coldmin}). In the other cases, the curve spirals infinitely many times around the origin, intersecting itself infinitely many times. It is symmetric with respect to reflections across the line through the origin and the point $ie^{i\theta_0}$. The curvature never changes sign, reaches its maximum absolute value at the point closest to the origin and goes to zero in each direction. The limiting growing angle is given by
\begin{equation*}
\lim_{s \rightarrow \pm \infty}\frac{rT}{X}  = \frac{\pm h -iA}{\sqrt{h^2+A^2}}.
\end{equation*}

\begin{figure}
\subfloat[$h=\frac 1 2$, $A=1$]{\label{fig:minh05y1}\includegraphics{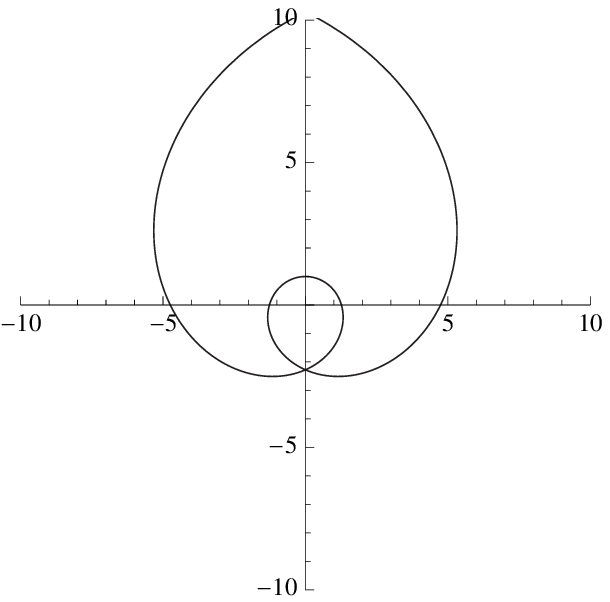}} \quad
\subfloat[$h=\frac{1}{8}$, $A=1$]{\label{fig:minh0125y1}\includegraphics{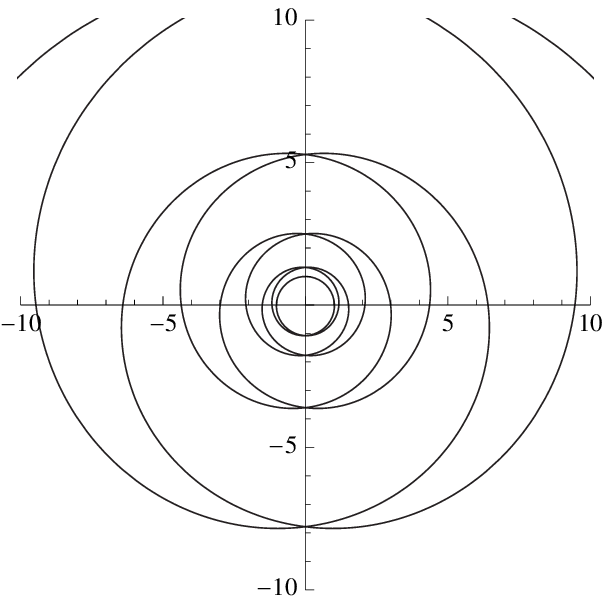}} 
\caption{As $h$ decreases to $0$, the curve at distance $1$ to the origin converges to the unit circle.}
\label{fig:min1}
\end{figure}

In the limit $h \rightarrow 0$, Equations \eqref{LagmarksGrunnjafna} and \eqref{Lagmarkshneppi} become
\begin{equation*}
k = -\frac{\nu}{r^2} \quad\quad \text{and}\quad \quad\left\{
\begin{aligned}
\tau' &= \frac{\tau^2}{r^2} \\
\nu'&= \frac{\tau\nu}{r^2}.
\end{aligned}
\right.
\end{equation*}

Like in the rotating surface case, every nonzero point on the $\nu$-axis is a fixed point of this system. The other trajectories are found by noticing that the function $\frac{\nu}{\tau}$ is constant so they are straight lines of the form $\nu = a\tau$ for some real constant $a$. This gives $\tau' = \frac{1}{a^2+1}$ so $\tau = \frac{1}{a^2+1}s$ and thus $\nu = \frac{a}{a^2+1}s$. Moreover, $k = -\frac{a}{s}$ so $\theta = -a\log |s|$. Unless $a=0$, the curvature $k$ blows up at the origin so the corresponding curve $X$ is only defined for $s>0$ (or $s<0$) where it is given by the simple formula
\begin{equation}
\label{lagmarksmarkgildi}
X = \frac{1+ia}{a^2+1}se^{-ia\log s} = \frac{s^{1-ia}}{1-ia}, \quad s >0.
\end{equation}
These curves of course have a constant growing angle $\frac{rT}{X} = \frac{1-ia}{\sqrt{1+a^2}}$.

Just as in the rotating surface case, we can apply Theorem \ref{samleitni} to get the same type of convergence of certain generating curves to the circular curves corresponding to the fixed points on the $\nu$-axis. However, here this convergence can also be seen directly from \eqref{lagmarksferill}.

The curve given by \eqref{lagmarksmarkgildi} can also be realized as a limit. Put $A = ah$ and $\theta_0 = a \log \frac{2}{h(a^2+1)}$ into \eqref{lagmarksferill} to get
\begin{equation*}
X = \frac{s+ia\sqrt{s^2+h^2(a^2+1)^2}}{a^2+1}e^{-ia\log\frac{s+\sqrt{s^2+h^2(a^2+1)^2}}{2}}, \quad s \in \mathbf R.
\end{equation*}
When $h \rightarrow 0$, this curve converges to the curve given by \eqref{lagmarksmarkgildi}, $C^k$-uniformly on compact subets of $(0,\infty)$ for each $k$.

\section{Helicoidal surfaces of constant mean curvature}
\label{cmcsurfaces}

The constant mean curvature helicoidal surfaces  were studied by Do Carmo and Dajczer in \cite{docarmdajcz} and further by Hitt and Roussos in \cite{hittrouss} and Perdomo in \cite{perd}. We include them here to give a classification of the immersed cylinders in this family of surfaces.

From Equation \eqref{meancurvature} we see that the helicoidal surfaces of constant mean curvature $H\neq0$ are given by the equation
\begin{equation*}
k = -\frac{H(\tau^2+h^2)^{\frac 3 2}}{h(r^2+h^2)}-\frac{\nu}{r^2+h^2}.
\end{equation*}
In the special case $h = \infty$, the equation reduces to $k = -H$. Therefore, $X$ is a circle and the surface $\Sigma$ is a cylinder, a well-known embedded constant mean curvature surface.

Now, by rescaling the surface and parametrizing $X$ backwards if necessary, we can assume $H=-1$.  Then we have the following two-dimensional system of ODEs for $\tau$ and $\nu$, which  also appeared in \cite{perd}
\begin{equation}
\label{cmchneppi}
\left\{\begin{aligned}
\tau' &= \frac{\tau^2+h^2}{h(r^2+h^2)}(h+\nu\sqrt{\tau^2+h^2})\\
\nu' &= \frac{h\tau\nu-\tau(\tau^2+h^2)^{\frac 3 2}}{h(r^2+h^2)}
\end{aligned}\right.
\end{equation}
The system has a unique fixed point $(0,-1)$, corresponding to $X$ being the unit circle traced counterclockwise. To find the other trajectories we make a change of variables. First,  define the norm-preserving involution
\begin{equation*}
\Phi_{h}:\mathbf R^2 \rightarrow \mathbf R ^2, \quad (x_1,x_2) \mapsto \left(\frac{\sqrt{r^2+h^2}}{\sqrt{x_2^2+h^2}}x_2,\frac{h}{\sqrt{x_2^2+h^2}}x_1\right),
\end{equation*}
where $r^2 = x_1^2+x_2^2$. Note that $\Phi_\infty$ is just reflection across the line $x_1=x_2$. Then let $x$ and $y$ be the functions given by $(x,y) = \Phi_h(\nu,\tau)$. Direct calculations using \eqref{cmchneppi} yield that the functions $x$ and $y$ are solutions to the system of ODEs
\begin{equation*}
\left\{
\begin{aligned}
x' &= \frac{h}{\sqrt{y^2+h^2}}(y+1) \\
y' &= \frac{-hx}{\sqrt{y^2+h^2}}.
\end{aligned}\right.
\end{equation*}
Now, introduce a new variable $u$ such that $\frac{du}{ds} = \frac{h}{\sqrt{y^2+h^2}}$. Then we get the simpler system
\begin{equation*}
\left\{\begin{aligned}
\frac{dx}{du} &= y+1\\
\frac{dy}{du} &= -x,
\end{aligned}\right.
\end{equation*}
whose trajectory space consists of concentric circles around the fixed point $(0,-1)$. The solutions are parametrized by  $(x,y) = (R\cos u, -1 - R\sin u)$, where $R \in [0,\infty)$ is any constant. Now, the solutions to the original system are given by $(\nu,\tau) = \Phi_h(x,y)$, so they are also periodic. Since $\Phi_h$ is norm-preserving, each solution is contained in an annulus with center at the origin, inner radius $|R-1|$ and outer radius $R+1$. It touches each boundary once in every period. The corresponding curve $X$ is therefore also contained in the same annulus and consists of repeated identical excursions between the two boundaries of the annulus. Note that $X$ passes through the origin if and only if $R=1$.

Now, let's examine a whole excursion, i.e., we go from the outer boundary, touch the inner boundary and go back out to the outer boundary. This corresponds to the $(\tau,\nu)$ trajectory going through one period. Let $\Delta \theta$ denote the difference between the values of the tangent angle before and after the excursion. There are two possibilities:\\

1) The angle difference $\Delta \theta$ is of the form $\frac{p2\pi}{q}$ for some relatively prime, positive integers $p$ and $q$. Then after $q$ excursions the curve $X$ closes up. Thus, $X$ is a closed curve, an immersed $\mathbf S^1$, with $q$-fold rotational symmetry and the surface is an immersed cylinder. The number $p$ is the rotation number of $X$.\\

2) The angle difference $\Delta \theta$ is not of this form. In this cases the curve $X$ never closes up and takes infinitely many excursions. It is dense in the annulus.\\

In the rest of this section, we find all possible values of $\Delta \theta$ in order to obtain a classification of the closed immersed curves mentioned above, in the spirit of the classification of the immersed self-shrinkers of the curve shortening flow in the plane given by Abresch and Langer in \cite{abrlang}.\\

Since $d\theta = kds$, $\Delta \theta$ is given by integrating $k$ over one period:
\begin{equation}
\label{deltatheta}
\begin{aligned}
\Delta \theta &= \int kds = \int_{-\pi}^{\pi}\left(\frac{h\sqrt{r^2+h^2}}{y^2+h^2} - \frac{y}{h\sqrt{r^2+h^2}}\right)du \\
& = \int_{-\pi}^\pi \left(\frac{h\sqrt{R^2+1+2R\sin u+h^2}}{(1+R\sin u)^2+h^2} +\frac{1+R\sin u}{h\sqrt{R^2+1+2R\sin u+h^2}}\right)du.
\end{aligned}
\end{equation}

Clearly, $\Delta \theta$ is smooth in $R$ and $h$ on $[0,\infty)\times(0,\infty]$. Note that when $h=\infty$, $k=1$ so $\Delta \theta = 2 \pi$ for all $R$ and therefore $\frac{\partial}{\partial R}\Delta \theta = 0$.

It turns out to be convenient to work with $\Delta \phi$, where $\phi$ is the angle of the curve $X$, i.e., $X = re^{i\phi}$. Since $X = (\tau + i\nu)e^{i\theta}$, we get the following relation between $\Delta\phi$ and $\Delta\theta$, by noticing that the $(\tau,\nu)$ trajectory winds clockwise around the origin when $R>1$, goes through the origin when $R=1$ and does neither when $0 \leq R <1$:
\begin{equation}
\label{thetaphi}
\Delta \phi = \begin{cases} \Delta \theta & \text{if }0\leq R < 1 ,\\
\Delta \theta - \pi & \text{if } R=1,\\
\Delta \theta- 2\pi & \text{if } R>1.
 \end{cases} 
\end{equation}
Since $d\phi = - \frac{\nu}{r^2}ds$, $\Delta \phi$ is given by integrating over one period:
\begin{equation}
\label{angledifference}
\begin{aligned}
\Delta\phi & = -\int \frac{\nu}{r^2}ds = - \int_{\pi}^{\pi}\frac{\sqrt{r^2+h^2}}{hr^2}ydu \\
& = \int_{-\pi}^{\pi}\frac{\sqrt{R^2+1+2R\sin u + h^2}}{h(R^2+1+2R\sin u)}(1+R\sin u)du.
\end{aligned}
\end{equation}
We should mention that the integral \eqref{deltatheta} also appears in \cite{perd}, whereas the integral \eqref{angledifference} is used in \cite{hittrouss}.

\begin{lem}
When R=0, $\Delta\phi$ takes the value $\frac{2\pi\sqrt{h^2+1}}{h}$.
\end{lem}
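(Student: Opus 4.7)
The proof is essentially a direct substitution into the integral formula \eqref{angledifference}, so my plan is simply to set $R=0$ and evaluate. Specifically, when $R=0$ the integrand simplifies dramatically: the factor $1+R\sin u$ becomes $1$, the term $R^2+1+2R\sin u$ becomes $1$, and the square root becomes $\sqrt{1+h^2}$. Hence the integrand is the constant $\frac{\sqrt{1+h^2}}{h}$, independent of $u$.

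Integrating this constant over $[-\pi,\pi]$ yields $\frac{2\pi\sqrt{h^2+1}}{h}$, which is the claimed value. No further estimates or case analysis are needed.

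The only thing worth noting, and what I would briefly comment on in the writeup, is why the substitution $R=0$ is actually meaningful in the present context. When $R=0$ the trajectory $(x,y)$ reduces to the fixed point $(0,-1)$, so $(\tau,\nu)$ is the constant solution and the generating curve $X$ is the unit circle around the origin. The formula \eqref{angledifference} was derived by integrating over one period of the trajectory, so the value $\Delta\phi = \frac{2\pi\sqrt{h^2+1}}{h}$ should be interpreted as the change in the angle $\phi$ of $X$ as $u$ ranges over a full period $[-\pi,\pi]$. Since $\Delta\phi$ is smooth in $R$ on $[0,\infty)$, the evaluation at $R=0$ is unambiguous. There is no genuine obstacle to this lemma — it is a one-line computation recorded as a lemma because the value will be used as a boundary reference point in the subsequent analysis of $\Delta\phi(R,h)$.
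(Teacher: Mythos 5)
Your proposal is correct and matches the paper's proof exactly: the paper likewise just sets $R=0$ in Equation \eqref{angledifference}, where the integrand becomes the constant $\frac{\sqrt{1+h^2}}{h}$ and integrates to the claimed value. The extra remarks on why the $R=0$ case is meaningful are fine but not needed.
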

\begin{proof}
This follows directly from setting $R=0$ in Equation \eqref{angledifference}.
\end{proof}

The complete elliptic integral of the second kind shows up in our next lemma. It is defined as
\begin{equation*}
E(k) = \int_{0}^{\frac \pi 2}\sqrt{1-k^2\sin^2\theta}d\theta.
\end{equation*}
and gives $\frac 1 4$ of the circumference of an ellipse with semi-major axis $1$ and eccentricity $k$.

\begin{lem}
When R=1, $\Delta\phi$ takes the value $\frac{2\sqrt{h^2+4}}{h}E(\frac{2}{\sqrt{h^2+4}})$.
\end{lem}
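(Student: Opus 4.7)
The plan is to evaluate the integral in \eqref{angledifference} directly at $R=1$ and manipulate it into the standard form of the complete elliptic integral of the second kind. The key observation is that setting $R=1$ produces a cancellation which dramatically simplifies the integrand: the factor $(1+\sin u)$ in the numerator combines with the factor $R^2+1+2R\sin u = 2+2\sin u = 2(1+\sin u)$ in the denominator to leave just a constant, so
\begin{equation*}
\Delta\phi = \frac{1}{2h}\int_{-\pi}^{\pi}\sqrt{2+2\sin u + h^2}\,du.
\end{equation*}

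Next I would rewrite $2+2\sin u+h^2$ as $h^2+4\cos^2(\pi/4 - u/2)$ using the half-angle identity $1+\sin u = 2\cos^2(\pi/4 - u/2)$, and then convert $\cos^2$ to $1-\sin^2$ to obtain $h^2+4-4\sin^2(\pi/4 - u/2)$. Substituting $v = \pi/4 - u/2$ (so $du = -2dv$) turns the integral into
\begin{equation*}
\Delta\phi = \frac{1}{h}\int_{-\pi/4}^{3\pi/4}\sqrt{h^2+4-4\sin^2 v}\,dv = \frac{\sqrt{h^2+4}}{h}\int_{-\pi/4}^{3\pi/4}\sqrt{1 - k^2\sin^2 v}\,dv,
\end{equation*}
where $k = 2/\sqrt{h^2+4}$.

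Finally, since the integrand $\sqrt{1-k^2\sin^2 v}$ has period $\pi$ in $v$, the integral over any interval of length $\pi$ equals $\int_0^\pi\sqrt{1-k^2\sin^2 v}\,dv$, which by symmetry about $v=\pi/2$ is $2\int_0^{\pi/2}\sqrt{1-k^2\sin^2 v}\,dv = 2E(k)$. Plugging in the value of $k$ yields the claimed expression. I do not anticipate any serious obstacle here; the whole argument is just two successive trigonometric substitutions once the $R=1$ cancellation has been spotted, so the proof should fit in a few lines.
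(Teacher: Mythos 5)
Your proposal is correct and follows essentially the same route as the paper: after the $R=1$ cancellation, both arguments reduce the integral to $\int\sqrt{h^2+4-4\sin^2(\cdot)}$ by a half-angle substitution (the paper shifts $\sin u\to\cos u$ and sets $u=2\theta$, which is the same change of variables as your $v=\pi/4-u/2$ combined with periodicity). All steps check out.
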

\begin{proof}
Set $R=1$ in Equation \eqref{angledifference} to get
\begin{equation*}
\begin{aligned}
\Delta \phi & = \int_{-\pi}^{\pi}\frac{\sqrt{h^2+2+2\sin u}}{2h}du \\
& = \frac{\sqrt{h^2+4}}{h}\int_{-\frac \pi 2}^{\frac \pi 2}\sqrt{1-\frac{4}{h^2+4}\sin^2\theta}d\theta,
\end{aligned}
\end{equation*} 
where in the second equation we replaced $\sin u$ with $\cos u$, set $u = 2\theta$ and used $\cos 2\theta = 1-2\sin^2\theta$.
\end{proof}
The value in the lemma is half the circumference of an ellipse with semi-minor axis $1$ and semi-major axis $\frac{\sqrt{h^2+4}}{h}$.

\begin{lem}
For each $h$, $\lim_{R \rightarrow \infty}\Delta \phi = 0$.
\end{lem}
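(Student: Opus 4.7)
The plan is to apply the dominated convergence theorem (or equivalently uniform convergence on a compact interval) to the integral formula \eqref{angledifference} after rescaling by $R$. Setting $\epsilon = 1/R$ and pulling one factor of $R$ out of the numerator and two factors out of the denominator rewrites the integrand as
\[
f_\epsilon(u) \;=\; \frac{(\epsilon + \sin u)\,\sqrt{1 + 2\epsilon \sin u + (1+h^2)\epsilon^2}}{h\bigl(1 + 2\epsilon \sin u + \epsilon^2\bigr)}.
\]
At the formal value $\epsilon = 0$, this reduces to $f_0(u) = \sin u / h$, whose integral over $[-\pi,\pi]$ is zero.

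The next step is to verify that $f_\epsilon \to f_0$ uniformly on $[-\pi,\pi]$ as $\epsilon \to 0^+$. The identity
\[
1 + 2\epsilon \sin u + \epsilon^2 \;=\; (1 + \epsilon \sin u)^2 + \epsilon^2 \cos^2 u \;\geq\; (1-\epsilon)^2
\]
shows the denominator is bounded below by $h(1-\epsilon)^2 \geq h/4$ whenever $\epsilon \leq 1/2$, while the numerator is uniformly bounded by a constant depending only on $h$. Hence $f_\epsilon(u)$ extends to a jointly continuous function on the compact set $[-\pi,\pi] \times [0,1/2]$, and uniform convergence follows. Passing the limit inside the integral yields
\[
\lim_{R \to \infty}\Delta\phi \;=\; \int_{-\pi}^{\pi}\frac{\sin u}{h}\,du \;=\; 0.
\]

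The main obstacle to anticipate is the cancellation hidden in \eqref{angledifference}: if one writes $1 + R\sin u = (P + 1 - R^2)/2$ with $P = R^2+1+2R\sin u$, the integral splits as $\tfrac{1}{2h}\int\sqrt{P+h^2}\,du \;-\; \tfrac{R^2-1}{2h}\int\sqrt{P+h^2}/P\,du$, and each piece grows linearly in $R$. Any direct triangle-inequality estimate therefore loses the cancellation. The rescaling $\epsilon = 1/R$ is the device that exposes the cancellation automatically, reducing the argument to a routine continuity statement on a compact parameter set.
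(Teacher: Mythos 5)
Your argument is correct and is essentially the same as the paper's: both identify the pointwise limit $\sin u/h$ of the integrand and justify interchanging limit and integral, the paper doing so in one line via dominated convergence (for $R\geq 2$ the integrand is bounded by $\sqrt{h^2+1}/h$ since $|1+R\sin u|\leq r$), while you obtain uniform convergence after the substitution $\epsilon=1/R$. The ``hidden cancellation'' you warn about is not actually an obstacle for the direct dominated-convergence route, so the rescaling, though perfectly valid, is not needed.
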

\begin{proof}
If $R\geq 2$, then $r \geq 1$ so the integrand in \eqref{angledifference} is bounded by $\frac{\sqrt{h^2+1}}{h}$. By dominated convergence, $\lim_{R\rightarrow \infty}\Delta \phi = \int_{-\pi}^{\pi}\frac{\sin u}{h} = 0$.
\end{proof}

\begin{lem}
For each $h$, $\Delta \phi$ is strictly decreasing in $R$.
\end{lem}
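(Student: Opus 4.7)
My approach will be to differentiate \eqref{angledifference} under the integral sign with respect to $R$ and simplify via a single integration by parts that eliminates the odd-in-$\sin u$ term. Using $r^2 = R^2+1+2R\sin u$ and $\partial_R r^2 = 2(R+\sin u)$, a direct computation with the quotient rule and the identity $\cos^2 u = 1 - \sin^2 u$ gives
\begin{equation*}
\frac{d\Delta\phi}{dR} = -\int_{-\pi}^{\pi} \frac{R(r^2+2h^2)\cos^2 u + r^2 h^2 \sin u}{hr^4\sqrt{r^2+h^2}}\,du.
\end{equation*}
The first summand in the numerator is non-negative, but the $\sin u$ summand has mixed sign, and since $r^2$ itself depends on $\sin u$ it does not integrate to zero; this is where integration by parts enters.

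From $\sin u\,du = -d(\cos u)$ and $\tfrac{d}{du}g(r^2) = 2R\cos u\,g'(r^2)$ one obtains, for any smooth $g$, the identity
\begin{equation*}
\int_{-\pi}^{\pi}\sin u\cdot g(r^2)\,du = 2R\int_{-\pi}^{\pi}\cos^2 u\cdot g'(r^2)\,du,
\end{equation*}
the boundary term vanishing since $r^2$ takes the same value at $u = \pm\pi$. I apply this with $g(\rho) = 1/(\rho\sqrt{\rho+h^2})$, for which $g'(\rho) = -(3\rho+2h^2)/(2\rho^2(\rho+h^2)^{3/2})$, to rewrite the offending $\sin u$ integral. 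Combining the two resulting integrals over a common denominator yields
\begin{equation*}
\frac{d\Delta\phi}{dR} = \frac{R}{h}\int_{-\pi}^{\pi}\frac{\cos^2 u}{r^4(r^2+h^2)^{3/2}}\bigl[h^2(3r^2+2h^2)-(r^2+2h^2)(r^2+h^2)\bigr]du.
\end{equation*}

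The crucial observation, which I expect to be the cleanest part of the argument, is that the bracketed expression collapses: expanding gives $3r^2h^2+2h^4-(r^4+3r^2h^2+2h^4) = -r^4$, and the $r^4$ cancels against the $r^4$ in the denominator. The derivative therefore simplifies to
\begin{equation*}
\frac{d\Delta\phi}{dR} = -\frac{R}{h}\int_{-\pi}^{\pi}\frac{\cos^2 u}{(r^2+h^2)^{3/2}}\,du,
\end{equation*}
which is manifestly strictly negative for $R > 0$ and any $h > 0$, the integrand being non-negative and strictly positive away from $u = \pm\pi/2$. Since $\Delta\phi$ is smooth in $R$ on $[0,\infty)$, this gives strict monotonicity. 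The main point is to identify the right auxiliary function $g$ so that the integration by parts produces a perfect cancellation; once that is set up the remaining work is elementary polynomial algebra.
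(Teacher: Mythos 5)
Your computation is correct as far as it goes, and it takes a genuinely different route from the paper. The paper sidesteps the direct differentiation in $R$ by computing the mixed partial $\frac{\partial}{\partial h}\frac{\partial}{\partial R}\Delta\phi$, showing it is positive, and then integrating down from the limit case $h=\infty$ where $\frac{\partial}{\partial R}\Delta\phi=0$. You instead attack $\frac{\partial}{\partial R}\Delta\phi$ head on; I have checked your algebra (the identity $(R+\sin u)(1+R\sin u)=R\cos^2u+r^2\sin u$ is the key simplification behind your first display, your integration-by-parts identity with $g(\rho)=\rho^{-1}(\rho+h^2)^{-1/2}$ is right, and the bracket does collapse to $-r^4$), and your closed form
\begin{equation*}
\frac{d\Delta\phi}{dR} = -\frac{R}{h}\int_{-\pi}^{\pi}\frac{\cos^2 u}{(r^2+h^2)^{3/2}}\,du
\end{equation*}
is consistent with the paper's mixed-partial formula (apply your own integration-by-parts identity to the $-h^2\sin u$ term there). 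Your version is more explicit and arguably cleaner, at the cost of a longer computation.

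There is, however, one genuine gap: your closing sentence asserts that $\Delta\phi$ is smooth in $R$ on $[0,\infty)$, and this is false at $R=1$. There the trajectory passes through the origin, $r^2=R^2+1+2R\sin u$ vanishes at $u=-\pi/2$, and by \eqref{thetaphi} together with the smoothness of $\Delta\theta$ the function $\Delta\phi$ has a jump discontinuity of total size $2\pi$ at $R=1$ (it drops by $\pi$ on each side). Correspondingly, differentiation under the integral sign fails at $R=1$: setting $u=-\pi/2+\epsilon$ one finds $r^2\sim\epsilon^2$ and $\cos^2u\sim\epsilon^2$, so the $\partial_R$-integrand in your first display behaves like $\epsilon^{-2}$ and is not integrable; the subsequent integration by parts only formally regularizes a divergent integral. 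Your argument is valid on $(0,1)$ and on $(1,\infty)$, where $r^2\geq(R-1)^2>0$ keeps everything smooth and bounded. To conclude that $\Delta\phi$ is strictly decreasing on all of $[0,\infty)$ you must add that the discontinuity at $R=1$ goes the right way, which follows from \eqref{thetaphi}: $\lim_{R\to1^-}\Delta\phi=\Delta\theta(1)>\Delta\theta(1)-\pi=\Delta\phi(1)>\Delta\theta(1)-2\pi=\lim_{R\to1^+}\Delta\phi$. Note that the paper's own proof likewise only establishes $\frac{\partial}{\partial R}\Delta\phi<0$ for $R\neq1$ and relies implicitly on the same observation.
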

\begin{proof}
We need to show that $\frac{\partial}{\partial R} \Delta \phi < 0$ when $R\neq 1$. Since $\frac{\partial}{\partial R} \Delta \phi = 0$ in the limit case $h=\infty$, it is enough to show that $\frac{\partial}{\partial h}\frac{\partial}{\partial R} \Delta \phi > 0$. Now,
\begin{equation*}
\begin{aligned}
\frac{\partial}{\partial h}\frac{\partial}{\partial R} \Delta \phi &= \frac{\partial}{\partial R}\frac{\partial}{\partial h} \Delta \phi \\
&= \frac{\partial}{\partial R}\int_{-\pi}^\pi\frac{y}{h^2\sqrt{r^2+h^2}}du\\
&= \int_{-\pi}^\pi\frac{R\cos^2 u-h^2\sin u}{h^2(R^2+1+2R\sin u + h^2)^{\frac 3 2}}du.
\end{aligned}
\end{equation*}
The first term is clearly positive. However, so is the integral of the second term, since the denominator is smaller when $\sin u$ takes its negative values.
\end{proof}
Now, from the previous lemmas and Equation \eqref{thetaphi} we conclude that for each $h$, $\Delta \theta$ is a smooth, decreasing function of $R$ satisfying
\begin{itemize}
\item $\Delta \theta = \frac{2\pi\sqrt{h^2+1}}{h}$, when $R=0$,
\item $\Delta \theta = \frac{2\sqrt{h^2+4}}{h}E(\frac{2}{\sqrt{h^2+4}})+\pi$, when $R=1$,
\item $\lim_{R\rightarrow \infty}\Delta \theta = 2\pi$.
\end{itemize}
Therefore, we have the following theorem. The trichotomy corresponds to $R=1$, $R<1$ and $R>1$ respectively, and
\begin{equation*}
\alpha_h =  \frac{\sqrt{h^2+4}}{h\pi}E(\frac{2}{\sqrt{h^2+4}})+\frac 1 2.
\end{equation*}
Note that the winding number of $X$ is given by $\frac{q\Delta \phi}{2\pi}$, and since $\Delta \theta$ is positive, our choice of unit normal was the inward pointing one. Hence $H=1$ with respect to the outward pointing unit normal.
\begin{thm}
\label{immersedcylinder}
 Let $p$ and $q$ be relatively prime positive integers, such that $1 < \frac p q < \frac{\sqrt{h^2+1}}{h}$. Then there exists a unique (up to rotation) closed immersed curve $X$, with rotation number $p$ and $q$-fold rotational symmetry, such that it generates a helicoidal surface with pitch $h$ and constant mean curvature $H=1$ w.r.t.~ the outward pointing unit normal. Moreover,
 \begin{itemize}
 \item if $\frac p q =\alpha_h$, X goes through the origin,
 \item if  $\frac p q >\alpha_h$, X has winding number $p$ around the origin,
 \item if $\frac p q < \alpha_h$, X has winding number $p-q$ around the origin.
 \end{itemize}
\end{thm}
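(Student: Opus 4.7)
The plan is to read off the theorem from the lemmas already established for the period function $R \mapsto \Delta\theta(R,h)$. Those lemmas together show that $\Delta\theta(\,\cdot\,,h)$ is smooth and strictly decreasing on $[0,\infty)$, with $\Delta\theta(0,h) = \frac{2\pi\sqrt{h^2+1}}{h}$ and $\lim_{R\to\infty}\Delta\theta(R,h) = 2\pi$. Since $R=0$ corresponds to the fixed point $(\nu,\tau)=(-1,0)$, i.e., to the unit circle (which is not a non-trivial closed curve with the stated symmetry), the relevant range is $R>0$, on which $\Delta\theta(\,\cdot\,,h)$ is a bijection onto the open interval $\bigl(2\pi,\ \tfrac{2\pi\sqrt{h^2+1}}{h}\bigr)$.

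Given coprime positive integers $p,q$ with $1 < p/q < \sqrt{h^2+1}/h$, the value $2\pi p/q$ lies in this interval, so by the intermediate value theorem there exists a unique $R=R(p,q,h)>0$ with $\Delta\theta(R,h)=2\pi p/q$. By the discussion immediately preceding the theorem statement, the corresponding $(\tau,\nu)$-trajectory is periodic in $s$, and after $q$ complete periods the assembled curve $X$ closes up into an immersed $\mathbf S^1$ with rotation number $\tfrac{q\Delta\theta}{2\pi}=p$ and $q$-fold rotational symmetry (the latter relying on $\gcd(p,q)=1$). Uniqueness up to rotation is automatic: the $(\tau,\nu)$-trajectory is fully determined by $R$, and the only freedom left in recovering $X=(\tau+i\nu)e^{i\theta}$ is the initial tangent angle $\theta_0$, which corresponds precisely to a planar rotation of $X$.

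For the trichotomy I combine \eqref{thetaphi} with the lemma computing $\Delta\phi$ at $R=1$: specializing \eqref{thetaphi} to $R=1$ yields $\Delta\theta(1,h) = \Delta\phi(1,h)+\pi = 2\pi\alpha_h$, so by strict monotonicity of $\Delta\theta(\,\cdot\,,h)$ one has $p/q=\alpha_h$ iff $R=1$, $p/q>\alpha_h$ iff $R<1$, and $p/q<\alpha_h$ iff $R>1$. The winding number of $X$ around the origin is $\tfrac{q\Delta\phi}{2\pi}$, which by \eqref{thetaphi} equals $p$ when $R<1$ and $p-q$ when $R>1$; in the borderline case $R=1$ the $(\tau,\nu)$-trajectory passes through the origin, and hence so does $X$, as already noted in the section. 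Every analytical ingredient is supplied by the preceding lemmas, so there is no substantive obstacle; the only thing to handle carefully is the bookkeeping that converts the period data $(R,\Delta\theta,\Delta\phi)$ into the geometric attributes (rotation number, winding number, passage through the origin) of $X$ via \eqref{thetaphi}.
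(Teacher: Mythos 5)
Your proposal is correct and follows essentially the same route as the paper: the theorem is read off from the monotonicity and boundary-value lemmas for $\Delta\phi$ (hence $\Delta\theta$), with the intermediate value theorem giving a unique $R$ for each admissible $\frac{p}{q}$ and the relation \eqref{thetaphi} supplying the trichotomy and the winding numbers. The bookkeeping converting $(R,\Delta\theta,\Delta\phi)$ into rotation number, symmetry, and winding number matches the paper's discussion preceding the theorem.
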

A few of these generating curves appear in Figures \ref{fig:cmc1} - \ref{fig:cmclast}, where they are  rescaled to have the same outer radius.

\newpage

\begin{figure}
\subfloat[$h=1$, $p=4$, $q=3$]{\includegraphics{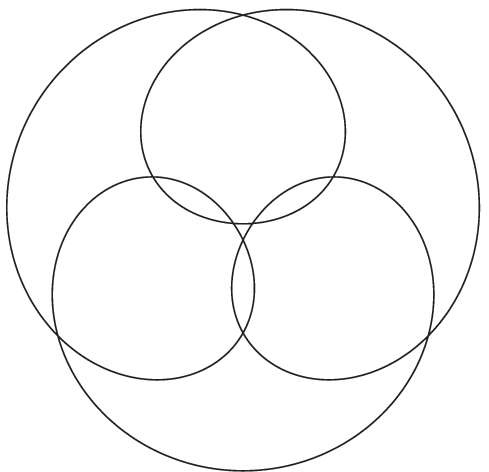}} \quad\quad\quad
\subfloat[$h=1$, $p=6$, $q=5$]{\includegraphics{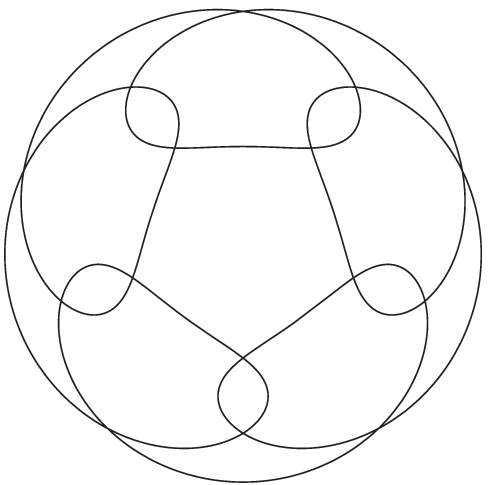}}\\
\caption{}
\label{fig:cmc1}
\end{figure}
\begin{figure}
\subfloat[$h=1$, $p=11$, $q=8$]{\includegraphics{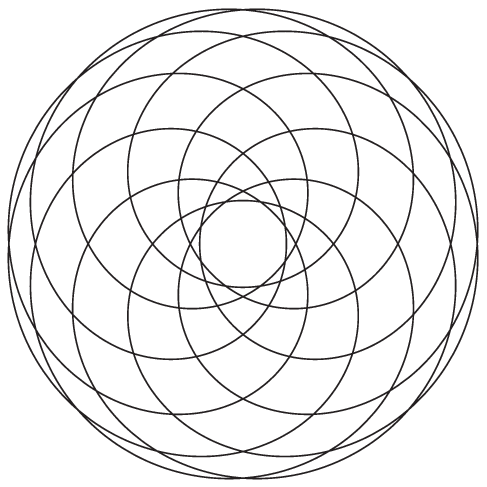}} \quad\quad\quad
\subfloat[$h=1$, $p=19$, $q=15$]{\includegraphics{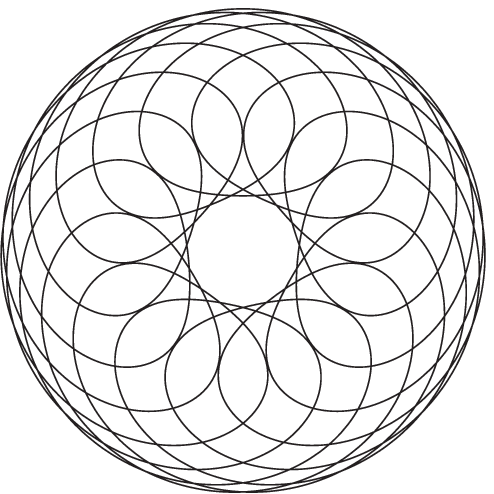}}\\
\caption{}
\end{figure}
\begin{figure}
\subfloat[$h=\frac 1 2$, $p=2$, $q=1$]{\includegraphics{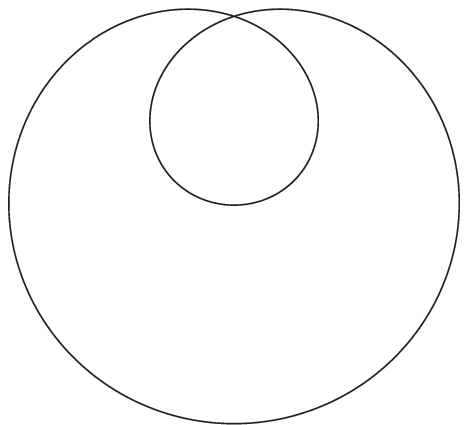}}\quad\quad\quad
\subfloat[$h=\frac 1 2$, $p=3$, $q=2$]{\includegraphics{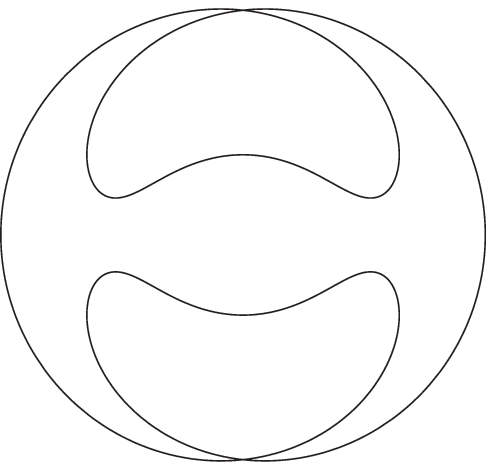}}\\
\caption{}
\end{figure}
\clearpage
\begin{figure}
 \subfloat[$h=\frac 1 2$, $p=5$, $q=3$]{\includegraphics{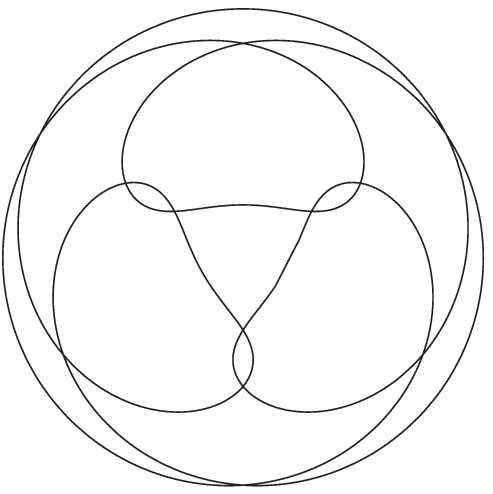}}\quad\quad\quad
\subfloat[$h=\frac 1 2$, $p=11$, $q=6$]{\includegraphics{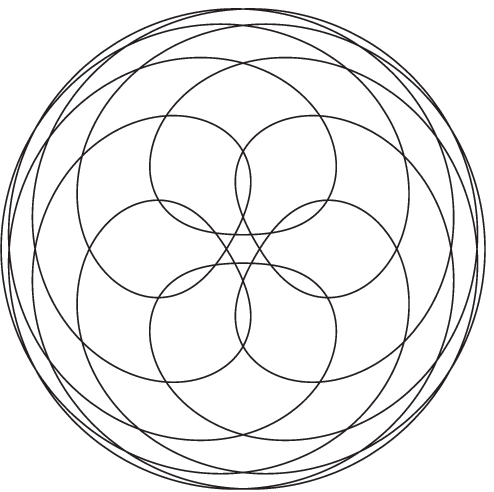}}\\
\caption{}
\end{figure}
\begin{figure}
\subfloat[$h=\frac 1 5$, $p=15$, $q=4$]{\includegraphics{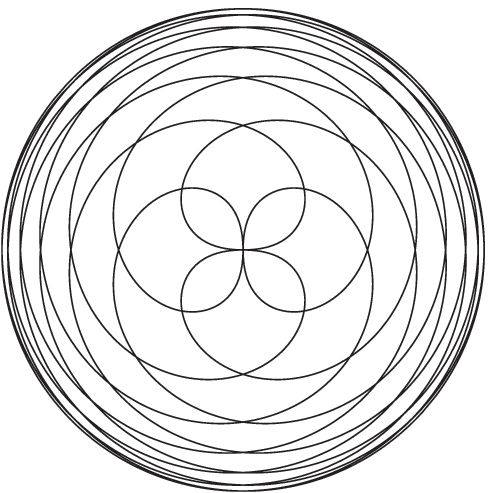}} \quad\quad\quad
\subfloat[$h=\frac 1 5$, $p=13$, $q=5$]{\includegraphics{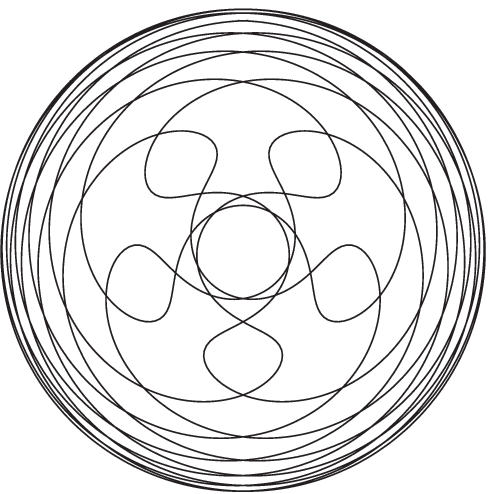}}\\
\caption{}
\end{figure}
\begin{figure}
\subfloat[$h=2$, $p=11$, $q=10$]{\includegraphics{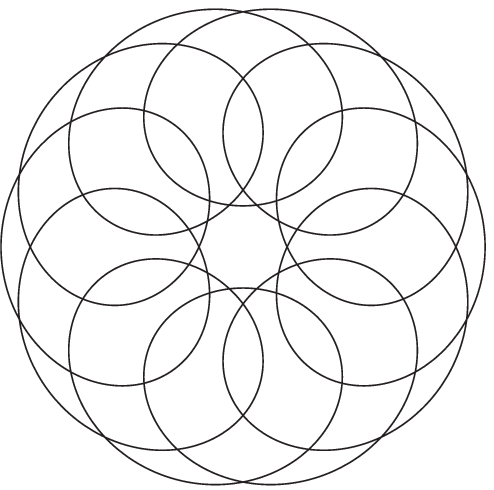}} \quad\quad\quad
\subfloat[$h=2$, $p=29$, $q=26$]{\includegraphics{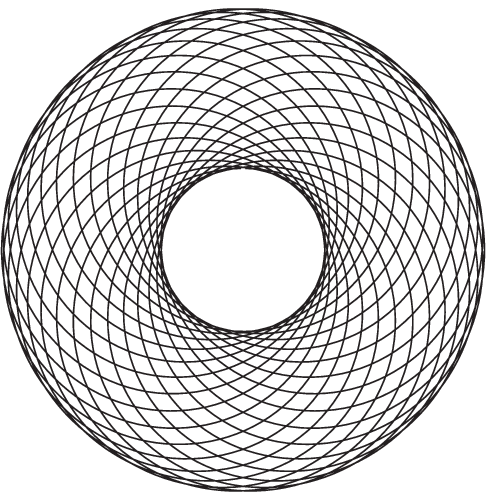}}\\
\caption{}
\end{figure}
\clearpage
\begin{figure}
\subfloat[$h=5$, $p=52$, $q=51$]{\includegraphics{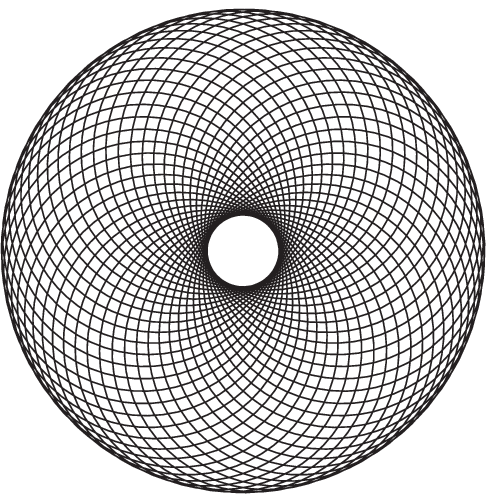}} \quad\quad\quad
\subfloat[$h=5$, $p=55$, $q=54$]{\includegraphics{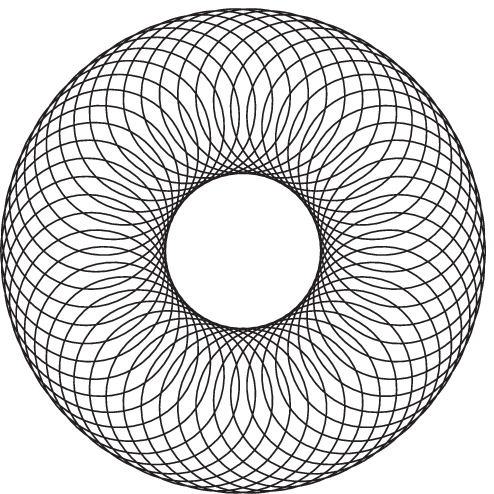}}\\
\caption{}
\label{fig:cmclast}
\end{figure}
\newpage

\section*{Acknowledgements}

The author would like to thank Eric Marberg for reading over the draft and providing valuable feedback. He would also like to thank Oscar Perdomo for sharing with him his recent paper on the CMC helicoidal surfaces. Finally, he would like to thank his advisor, Tobias Colding, for guidance and support.


\providecommand{\bysame}{\leavevmode\hbox to3em{\hrulefill}\thinspace}
\providecommand{\MR}{\relax\ifhmode\unskip\space\fi MR }
\providecommand{\MRhref}[2]{%
  \href{http://www.ams.org/mathscinet-getitem?mr=#1}{#2}
}
\providecommand{\href}[2]{#2}

\end{document}